\newtheorem{same}{This should never appear}[section]
\newtheorem{defin}[same]{Definition}
\newtheorem{remark}[same]{Remark}
\newtheorem{theorem}[same]{Theorem}
\newtheorem{example}[same]{Example}
\newtheorem{lemma}[same]{Lemma}
\newtheorem{fact}[same]{Fact}
\newtheorem{cor}[same]{Corollary}
\newtheorem{prop}[same]{Proposition}
\newtheorem{nota}[same]{Notation}
\newtheorem{probl}[same]{Problem}
\newtheorem{defin*}{Definition}
\newtheorem*{theorem*}{Theorem}
\newtheorem*{theorem1}{Theorem \ref{i-eq}}
\newtheorem*{theorem2}{Theorem \ref{main-f}}
\newcommand{\skipitems}[1]{%
  \addtocounter{\@enumctr}{#1}%
}
\newcommand{\rest}{\mathord{\upharpoonright}}
\newcommand{\id}{\textrm{id}}
\newcommand{\K}{\mathbf{K}}
\newcommand{\Rm}{R\text{-Mod}}
\newcommand{\leap}[1]{\le_{#1}}
\newcommand{\lea}{\leap{\K}}
\title{On limit models and parametrized noetherian rings}
\date{AMS 2020 Subject Classification:
Primary:  13L05, 03C48. Secondary: 03C45, 03C60.\\
Key words and phrases.  Limit models; Noetherian rings; Parametrized injective modules; Abstract Elementary Classes;  Stability.} 
\author{Marcos Mazari-Armida}
\email{marcos\_mazari@baylor.edu}
\urladdr{https://sites.baylor.edu/marcos\_mazari/}
\address{Department of Mathematics \\ Baylor University \\ Waco, Texas, USA}
\thanks{The author's research was partially supported by an NSF grant DMS-2348881, an AMS-Simons Travel Grant 2022--2024, and a Simons Foundation grant: Travel Support for Mathematicians.}
\begin{document}

\begin{abstract}

We study limit models in the abstract elementary class of modules with embeddings as algebraic objects. We characterize parametrized noetherian rings using the degree of injectivity of certain limit models.

We show that the number of limit models and how close a ring is from being noetherian are inversely proportional.

\begin{theorem} Let $n \geq 0$ The following are equivalent.

\begin{enumerate}
\item $R$ is left $(<\aleph_{n } )$-noetherian but not left $(< \aleph_{n -1 })$-noetherian.
\item The abstract elementary class of modules with embeddings has exactly $n +1$ non-isomorphic $\lambda$-limit models for every $\lambda \geq (\operatorname{card}(R) + \aleph_0)^+$ such that the class is stable in $\lambda$. 
\end{enumerate}
\end{theorem}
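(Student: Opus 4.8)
The plan is to attach to each regular cardinal $\kappa\le\lambda$ a precise ``degree of injectivity'' of the associated $\lambda$-limit model, and then count the isomorphism types that arise. Since $\Kam$ has amalgamation, the joint embedding property and no maximal models, a $(\lambda,\delta)$-limit model depends up to isomorphism only on $\cof(\delta)$; so, assuming $\Kam$ is stable in $\lambda$ (which is what makes limit models of size $\lambda$ exist and lets one take successors in limit chains universal), the number of non-isomorphic $\lambda$-limit models equals the number of isomorphism types among the models $M_\kappa$, one for each regular $\kappa\le\lambda$, where $M_\kappa=\bigcup_{i<\kappa}N_i$ is the union of an increasing continuous chain of modules of size $\lambda$ with $N_{i+1}$ universal over $N_i$. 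Condition (1) forces $R$ to carry a left ideal not generated by $<\aleph_{n-1}$ elements, so $\operatorname{card}(R)\ge\aleph_{n-1}$ and $\lambda\ge\aleph_n$; hence the regular cardinals $\le\lambda$ are exactly $\aleph_0,\dots,\aleph_{n-1}$ together with the regular cardinals between $\aleph_n$ and $\lambda$.

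\textbf{Limit models are $(<\kappa)$-injective.} First I would show that $M_\kappa$ is injective relative to every left ideal $I$ generated by fewer than $\kappa$ elements. Given such an $I$ and $f\colon I\to M_\kappa$, regularity of $\kappa$ puts $f(I)$ inside some $N_i$; the pushout $P$ of the inclusion $I\hookrightarrow R$ and $f\colon I\to N_i$ is an extension of $N_i$ of size $\lambda$ carrying a solution of the system $\{ax=f(a):a\in I\}$ over $N_i$, and universality of $N_{i+1}$ over $N_i$ embeds $P$ into $N_{i+1}$ over $N_i$, so the image of that solution extends $f$. By Baer's criterion, if $R$ is left $(<\kappa)$-noetherian then $M_\kappa$ is injective; under (1) this applies to every regular $\kappa\ge\aleph_n$.

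\textbf{Non-injectivity below $\aleph_n$.} Conversely, if $R$ is not left $(<\kappa)$-noetherian I would fix a left ideal that is not $(<\kappa)$-generated and, by exhaustion, pick $\langle a_\xi:\xi<\kappa\rangle$ with $a_\xi\notin\langle a_\eta:\eta<\xi\rangle$ for all $\xi$; put $I=\langle a_\xi:\xi<\kappa\rangle$. Then I would build the defining chain of $M_\kappa$ together with a coherent family $f_\xi\colon\langle a_\eta:\eta\le\xi\rangle\to N_{\xi+1}$: at stage $\xi$, take the pushout of $\langle a_\eta:\eta<\xi\rangle\hookrightarrow\langle a_\eta:\eta\le\xi\rangle$ and $f_{<\xi}\colon\langle a_\eta:\eta<\xi\rangle\to N_\xi$ and embed it into $N_{\xi+1}$ over $N_\xi$ by universality, noting that $a_\xi\notin\langle a_\eta:\eta<\xi\rangle$ forces $f_\xi(a_\xi)\notin N_\xi$. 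The union $f\colon I\to M_\kappa$ then has $f(a_\xi)\notin N_\xi$ for all $\xi$, so it cannot extend to any $g\colon R\to M_\kappa$: such a $g$ would have $g(1)\in N_\xi$ for some $\xi<\kappa$, forcing $f(a_\xi)=a_\xi g(1)\in N_\xi$. Hence $M_\kappa$ is injective if and only if $R$ is left $(<\kappa)$-noetherian. In particular, under (1), for each $k<n$ every left ideal is $(<\aleph_{k+1})$-generated, so $M_{\aleph_k}$ is injective relative to $(<\aleph_k)$-generated ideals but not relative to $(<\aleph_{k+1})$-generated ones; these $n$ models are therefore pairwise non-isomorphic and none is injective.

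\textbf{Counting and the converse.} It remains to see that the injective models $M_\kappa$, $\aleph_n\le\kappa\le\lambda$, are mutually isomorphic. I would argue that for $\kappa\ge\aleph_n$ the injectivity of $M_\kappa$, combined with the homogeneity built into a limit chain, lets one re-present $M_\kappa$ as a $(\lambda,\mu)$-limit model for every regular $\mu$ with $\aleph_n\le\mu\le\lambda$ (equivalently, $M_\kappa$ is the $\lambda$-saturated model of $\Kam$), so by uniqueness of limit models of a fixed cofinality they are all isomorphic; together with the previous paragraph this yields exactly $n+1$ isomorphism types, giving $(1)\Rightarrow(2)$. For $(2)\Rightarrow(1)$ I would combine $(1)\Rightarrow(2)$ with the dichotomy that either $R$ is left $(<\aleph_m)$-noetherian but not left $(<\aleph_{m-1})$-noetherian for a unique $m\ge 0$ — giving exactly $m+1$ limit models — or $R$ is left $(<\aleph_m)$-noetherian for no finite $m$, in which case applying the construction above to ideals needing exactly $\aleph_m$ generators (which exist, again by exhaustion) shows the $M_{\aleph_m}$ pairwise non-isomorphic, producing infinitely many limit models. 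The main obstacle is precisely the re-presentation/saturation step — proving that injectivity of a limit model of size $\lambda$ pins down its isomorphism type — together with the bookkeeping in the by-hand construction; everything else reduces to universality of limit chains, pushouts, and Baer's criterion.
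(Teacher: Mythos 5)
Your two algebraic ingredients match the paper's: the $(\lambda,\kappa)$-limit model is $\kappa$-injective (paper's Lemma \ref{k-inj}, via the same pushout-then-universality argument), and a chain of generators $a_\xi\notin\langle a_\eta:\eta<\xi\rangle$ of length $\kappa$ lets you build a $(\lambda,\kappa)$-limit model that is not $\kappa^+$-injective (paper's Proposition \ref{disjoint-ap} and Lemma \ref{non-inj}); your pairwise non-isomorphism of $M_{\aleph_0},\dots,M_{\aleph_{n-1}}$ and your $(2)\Rightarrow(1)$ dichotomy are also essentially the paper's Lemmas \ref{below} and \ref{above}. However, there is a genuine gap exactly where you flag ``the main obstacle'': showing that the injective limit models $M_\kappa$ for regular $\kappa$ with $\aleph_n\le\kappa\le\lambda$ are all isomorphic. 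Without that step you only get a lower bound of $n+1$ isomorphism types, not the upper bound, so (1)$\Rightarrow$(2) is not established. Your proposed route --- re-presenting $M_\kappa$ as a $(\lambda,\mu)$-limit model for every regular $\mu\ge\aleph_n$, or identifying it with ``the $\lambda$-saturated model'' --- is not carried out, and it is not clear how to carry it out: when $R$ is not noetherian the class is strictly stable, a $(\lambda,\kappa)$-limit chain only gives homogeneity over pieces appearing cofinally in a chain of cofinality $\kappa$, and there is no a priori reason an injective $(\lambda,\aleph_n)$-limit model admits a resolution witnessing that it is a $(\lambda,\lambda)$-limit model.

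The paper closes this gap with a different, purely algebraic tool: Bumby's theorem (Fact \ref{inj-iso}), that two injective modules each embeddable in the other are isomorphic. Since every $\lambda$-limit model is universal in $\Rm_\lambda$ (Fact \ref{uni-l}), any two of them embed into each other, so once both are injective they are isomorphic (Lemma \ref{long-lim}). That single external input is what turns your lower bound into an exact count; you should either invoke it or supply a genuine uniqueness-of-saturated-models argument, which your sketch does not yet contain.
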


We further show that there are rings such that the abstract elementary class of modules with embeddings has exactly $\kappa$ non-isomorphic $\lambda$-limit models for every infinite cardinal $\kappa$.


\end{abstract}


\maketitle



 \section{Introduction}
 
 Limit models were introduced by Kolman and Shelah \cite{kosh} in the context of non-elementary model theory. They were introduced as a substitute for saturation in abstract elementary classes. Intuitively a \emph{limit model} is a universal model with some level of homogeneity (see Definition \ref{limit}). Limit models have proven to be  a key concept in the study of abstract elementary classes as witnessed by  for example \cite{shvi}, \cite{shelahaecbook}, \cite{grvavi}, \cite{grva}, \cite{vasey18} and \cite{leu2}.

 
 An abstract elementary class (AEC for short) is determined by a class of objects, in this paper \emph{all} modules, and a class of  embeddings, in this paper \emph{all} embeddings. Since we are only working in the abstract elementary class of modules with embeddings, there is no need to introduce what an abstract elementary class is and instead we work purely algebraically.   Furthermore, we introduce all the AEC notions used in this paper in the context of the  AEC of modules with embeddings in Section 2.2.


The first objective of this paper is to study limit models, in the AEC of modules with embeddings, as algebraic objects.  Limit models have been studied algebraically in many classes of modules  \cite{maz20}, \cite{kuma}, \cite{m2}, \cite{m3}, \cite{maz2}, \cite{m-tor} and \cite{mj}. Nevertheless, previous results only characterized the \emph{shortest} limit model and the \emph{long} limit models.  In this paper, we go further and we are able to understand \emph{all}  limit models.

The two algebraic notions that come into play when studying limit models are those of  $\kappa$-injective module \cite{eksa} and of $(<\kappa)$-noetherian ring (for $\kappa$ an infinite cardinal).  The first is a generalization of injective module while the second is a generalization of noetherian ring.

The key algebraic results are: the longer a limit model is the closer it is from being injective (Lemma \ref{k-inj}) and if a limit model is not injective it is because there is an ideal with many generators (Lemma \ref{non-inj}). Putting together these two results we obtain a characterization of  parametrized noetherian rings using limit models. 

\begin{theorem1} Assume $\kappa$ is a regular cardinal. The following are equivalent.
\begin{enumerate}
\item $R$ is left $(<\kappa)$-noetherian.
\item The $(\lambda, \kappa)$-limit model is $\kappa^+$-injective for every $\lambda$ such that the AEC of modules with embeddings is stable in $\lambda$ and $ \kappa \leq \lambda$. 
\item The $(\lambda, \kappa)$-limit model is $\kappa^+$-injective for some $\lambda$ such that the AEC of modules with embeddings is stable in $\lambda$ and $ \kappa \leq \lambda$.
\end{enumerate}
\end{theorem1}

 The second objective of this paper is to show that there are rings such that the AEC of modules with embeddings has many non-isomorphic limit models. The existence of such rings provide the \emph{first examples} of  abstract elementary classes with exactly $\kappa$ non-isomorphic $\lambda$-limit models for $\kappa > 2$.

 We show that the number of limit models in the AEC of modules with embeddings and how close a ring is from being noetherian are inversely proportional. The following result provides further evidence that many model theoretic notions have algebraic substance in the context of ring theory. The equivalence for $n=0$ was first obtained in \cite[3.12]{m2}.

 \begin{theorem2}Let $n \geq 0$ The following are equivalent.

\begin{enumerate}
\item $R$ is left $(<\aleph_{n } )$-noetherian but not left $(< \aleph_{n -1 })$-noetherian.\footnote{If $n=0$, this should be understood as  $R$ is  left $(<\aleph_{0 } )$-noetherian, i.e., $R$ is left noetherian. }
\item The abstract elementary class of modules with embeddings has exactly $n +1$ non-isomorphic $\lambda$-limit models for every $\lambda \geq (\operatorname{card}(R) + \aleph_0)^+$ such that the class is stable in $\lambda$. 
\end{enumerate}
 
\end{theorem2}

 We further show that the forward direction holds for every infinite cardinal $\kappa$ (Lemma \ref{inf}). More precisely, if $R$ is left $(<\aleph_{\kappa +1 } )$-noetherian but not left $(< \aleph_{\kappa })$-noetherian, then the AEC of modules with embeddings has exactly $\kappa$ non-isomorphic $\lambda$-limit models for every $\lambda \geq (\operatorname{card}(R) + \aleph_0)^+$ such that the class is stable in $\lambda$. 
 
 The proofs of Theorem \ref{main-f} and Lemma \ref{inf} rely on the key algebraic results mentioned before Theorem \ref{i-eq} and on Bumby's classical result \cite{bumby}  that injective modules which are embeddable in one another are isomorphic.

 Besides providing examples of AECs with many non-isomorphic limit models. The current paper strengthens the need to continue the study of strictly stable AECs started in \cite[\S 4, \S 5]{sh394}, \cite{tamenessone}, \cite{bovan}, \cite{vaseyt}, \cite{leu2}. Furthermore, the paper provides important intuition on what to expect in the strictly stable setting. The author hopes to use this intuition in future work.


 The paper is divided into two sections. Section 2 has the basic results and presents the notions of abstract elementary classes used in this paper. Section 3 has the main results. The first half of the section deals with the algebraic treatment of limit models and the second half uses these results to show that there are many non-isomorphic limit models if the ring is far from being noetherian.
 
 I would like to thank  Daniel Herden, Betty Montero, Jonathan Feigert and Manfred Dugas for discussions that helped to develop the paper. In particular, I would like to thank Daniel Herden for providing Example \ref{m-ex}. I would also like to thank Jeremy Beard, Jan Trlifaj, Sebastien Vasey, Wentao Yang, and an anonymous referee for helpful comments that helped improve the paper.

\section{Basic Results and Notions}
\subsection{Module theory} All rings considered in this paper are associative with unity, all modules are left $R$-modules and all ideals are left ideals unless specified otherwise. We denote infinite cardinals by $\kappa, \lambda, \mu$ and by $\kappa^+$ the least cardinal greater than $\kappa$. For a ring $R$, we will write $\text{card}(R)$ for its cardinality. 

\begin{defin} Let $\kappa$ be an infinite cardinal. An ideal $I$ is \emph{$(<\kappa)$-generated} if there is $\mu < \kappa$ and  $\{ a_i : i <\mu \} \subseteq I$ such that $I = \langle \{ a_i : i <\mu \} \rangle$.  $I$ is \emph{strictly $\kappa$-generated} if it is $(< \kappa^+)$-generated but not $(< \kappa)$-generated. 
\end{defin}

A cardinal $\mu$ is regular if the cofinality of $\mu$ is $\mu$.  Recall that $\aleph_0$ is regular and $\kappa^+$ is regular for every infinite cardinal $\kappa$.

\begin{prop}\label{small-strict} Let $R$ be a ring, $I$ an ideal, and $\kappa$ an infinite cardinal.
If $I$ is a strictly $\kappa$-generated ideal, then for every regular cardinal $\mu < \kappa$ there is $J$ a strictly $\mu$-generated ideal such that $J \subseteq I$. 
\end{prop}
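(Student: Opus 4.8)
The plan is to build $J$ directly as the union of a strictly increasing chain of ``small'' subideals of $I$ of length exactly $\mu$. First observe that the hypothesis can be weakened: since $\mu < \kappa$ and $I$ is not $(<\kappa)$-generated, $I$ is a fortiori not $(<\mu)$-generated, and this is the only property of $I$ the argument will use (the full strength of ``strictly $\kappa$-generated'' is not needed). I also note that it suffices to treat $\mu$ infinite, since the regular cardinals below $\kappa$ that are relevant are $\aleph_0$ and successor cardinals.

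I would construct by transfinite recursion a sequence $\langle a_\xi : \xi < \mu \rangle$ of elements of $I$, together with the ideals $I_\xi := \langle \{ a_\eta : \eta < \xi \} \rangle$. At stage $\xi$, the ideal $I_\xi$ is generated by at most $|\xi| < \mu$ elements, hence is $(<\mu)$-generated; since $I$ is not $(<\mu)$-generated this forces $I_\xi \subsetneq I$, so I may pick $a_\xi \in I \setminus I_\xi$. This choice guarantees $I_\xi \subsetneq I_{\xi+1}$ for every $\xi < \mu$, i.e.\ the chain $\langle I_\xi : \xi \leq \mu \rangle$ is strictly increasing and contained in $I$. Set $J := \bigcup_{\xi < \mu} I_\xi = \langle \{ a_\xi : \xi < \mu \} \rangle$. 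Then $J \subseteq I$ and $J$ is $\mu$-generated by construction.

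It remains to show $J$ is not $(<\mu)$-generated, and this is the step where regularity of $\mu$ is essential — indeed the only place it enters. Suppose toward a contradiction that $J = \langle \{ b_i : i < \nu \} \rangle$ with $\nu < \mu$. Each $b_i$ lies in some $I_{\xi_i}$ with $\xi_i < \mu$; since $\nu < \mu = \cof(\mu)$, the ordinal $\xi^* := \sup_{i < \nu} \xi_i$ is still $< \mu$, so all the $b_i$ lie in $I_{\xi^*}$ and hence $J \subseteq I_{\xi^*}$. But $I_{\xi^*+1} \subseteq J$ while $I_{\xi^*} \subsetneq I_{\xi^*+1}$, a contradiction. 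Hence $J$ is strictly $\mu$-generated, which completes the proof. The main (and essentially only) obstacle is ensuring that the union of the chain has no small generating set; arranging strict inclusion at \emph{every} step, so that the cofinality of $\mu$ can be exploited, is exactly what makes this work, and is where the construction would fail for singular $\mu$.
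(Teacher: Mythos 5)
Your proof is correct and follows essentially the same route as the paper's: both construct a strictly increasing chain of ideals $I_\xi = \langle \{a_\eta : \eta < \xi\} \rangle$ with $a_\xi \notin I_\xi$, take $J$ to be the union of the first $\mu$ stages, and use the regularity of $\mu$ to push any putative generating set of size $<\mu$ into a single $I_\alpha$, contradicting $a_\alpha \notin I_\alpha$. The only cosmetic difference is that you build the chain directly to length $\mu$ from the weaker hypothesis that $I$ is not $(<\mu)$-generated, whereas the paper first fixes a length-$\kappa$ generating sequence of $I$ with the same non-membership property and then truncates at $\mu$.
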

\begin{proof}
Let $\{a_\alpha : \alpha <\kappa \} \subseteq I$ such that:

\begin{enumerate}
\item $I = \langle \{ a_\alpha  : \alpha <\kappa \} \rangle$.
\item  for every $\alpha < \kappa$, $a_{\alpha } \notin I_\alpha := \langle \{ a_\beta : \beta < \alpha  \} \rangle$. 
\end{enumerate} 

This can be achieved since $I$ is a strictly $\kappa$-generated ideal. 

Let $J = I_\mu = \langle \{ a_\alpha : \alpha < \mu \} \rangle$. Assume for the sake of contradiction that $I_\mu$ is $(< \mu)$-generated. Then there is $\theta < \mu$ and $\{ b_j : j < \theta \} \subseteq I_\mu$ such that $I_\mu = \langle \{ b_j : j < \theta \} \rangle$. Since $I_\mu = \bigcup_{\alpha  < \mu} I_\alpha$ and $\mu$ is regular, there is $\alpha < \mu$ such that $ \{ b_j : j < \theta \} \subseteq I_\alpha$. This is a contradiction as $a_{\alpha  } \in  \langle \{ b_j : j < \theta \} \rangle$ but $a_{\alpha } \notin I_{\alpha }$ by Condition (2) of the construction. 
\end{proof}

The following cardinal  originally appears in \cite{ekl}.
\begin{defin}
Given a ring $R$, let $\gamma(R)$ be the minimum infinite cardinal such that every left ideal of $R$ is $(< \gamma(R))-$generated.  
\end{defin}

\begin{remark}
It is clear that  $\gamma(R) \leq \operatorname{card}(R)^+$. 
\end{remark}

We introduce parametrized Noetherian rings, which have been considered in, for example \cite[3.2.5]{ragr}, \cite[6.30, 8.12]{gotr} and \cite[\S 3.2]{cox}.

\begin{defin}
A ring  $R$ is left $(<\kappa)$-noetherian if every left ideal is  $(<\kappa)$-generated, i.e., $\gamma(R)  \leq \kappa$.
\end{defin}

 A left $(<\aleph_0)$-noetherian ring is precisely a left noetherian ring. 
 
 We provide a couple of examples.

\begin{example}\label{m-ex} Let $\alpha$ be an ordinal. Let $R= \mathbb{Q}[ x_i : i < \aleph_{\alpha} ]$, the polynomial ring over $\mathbb{Q}$ with commuting variables $\{ x_i : i < \aleph_{\alpha} \}$. $R$ is $(<\aleph_{\alpha +1})$-noetherian as $| R| = \aleph_\alpha$ and $R$ is not $(< \aleph_\alpha)$-noetherian as $\langle x_i : i < \aleph_{\alpha}   \rangle$ is not $(< \aleph_\alpha)$-generated.

\end{example}

\begin{example}[{ \cite[3.3.(1)]{ls}}]\label{ls}
Let $\Gamma$ be the set of all well-ordered subsets of the interval $[0, \infty)$ with respect to the usual order of the reals. Let $F$ be a field and let $x$ be a commuting variable. Let $R$ denote the set of all \emph{formal power series} of the form

\[
\sum_{\delta \in \Delta} a_{\delta} x^{\delta}
\]

where $a_{\delta} \in F$ and $\Delta \in \Gamma$. Then we can make $R$ into a ring with respect to addition and multiplication defined in the obvious way.

The only ideals of $R$ are of the form $\langle x ^{\alpha} \rangle$, which is finitely generated, and

\[
\langle x^{>\alpha} \rangle :=\left\{u x^{\beta} : \beta>\alpha \text { and } u \text { is unit }\right\} \cup\{0\},
\]

which is strictly countably generated as it is generated by $\{ x^{ \alpha + \frac{1}{n}} : n \geq 1 \} \cup \{0\}$. 

Therefore, $R$ is a left $(<\aleph_1)$-noetherian ring which is not left $(<\aleph_0)$-noetherian.

\end{example}

\begin{remark}
Volatile rings were introduced in  \cite[\S 3]{ls} as a class of rings opposite to noetherian rings. Example \ref{ls} shows that volatile rings are not far  from being noetherian from our perspective as Example \ref{ls} provides a volatile ring which is left $(<\aleph_1)$-noetherian. 
\end{remark}

We expect the existence of rings that are strictly $(<\aleph_\alpha)$-noetherian for limit ordinals $\alpha$, but we do not know any examples.

\begin{probl}\label{p2}
Show that for every (some) limit ordinal $\alpha$ there is a ring which is left $(<\aleph_\alpha)$-noetherian, but not left $(<\aleph_\beta)$-noetherian for every $\beta <\alpha$.
\end{probl}

We introduce a parametrized version of  injective module following \cite{eksa}.

 \begin{defin} Let $\mu$ be an infinite cardinal. $M$ is $\mu$-injective if for every ideal $I$  which is $(<\mu)$-generated and every homomorphism $f: I \to M$, there is a homomorphism $g: R \to M$ such that the following diagram commutes:
 
 \[\begin{tikzcd}
I \arrow[r, hook] \arrow[d, "f"'] & R \arrow[ld, "g", dashed] \\
M                                 &                          
\end{tikzcd}\]
 \end{defin}

 The following result follows from Baer's Criterion for injectivity \cite{baer}.
 
\begin{fact} For every $\mu \geq \gamma(R)$. 
$M$ is injective if and only if $M$ is $\mu$-injective. In particular for every $\mu \geq \operatorname{card}(R)^+$,  $M$ is $\mu$-injective if and only if $M$ is injective. 
\end{fact}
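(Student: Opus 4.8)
The statement to prove is the \textbf{Fact}: for every $\mu \geq \gamma(R)$, a module $M$ is injective if and only if $M$ is $\mu$-injective, and in particular for every $\mu \geq \operatorname{card}(R)^+$, $M$ is $\mu$-injective iff $M$ is injective.

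The plan is as follows. The second sentence of the Fact follows immediately from the first together with the Remark that $\gamma(R) \leq \operatorname{card}(R)^+$: if $\mu \geq \operatorname{card}(R)^+ \geq \gamma(R)$, then the first sentence applies verbatim. So the whole content is the first sentence, and there I would argue both directions separately.

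For the direction ``injective $\Rightarrow$ $\mu$-injective'': this holds for \emph{every} infinite $\mu$, with no hypothesis on $\gamma(R)$. Indeed, if $M$ is injective and $I$ is a $(<\mu)$-generated ideal, then $I \hookrightarrow R$ is in particular a monomorphism of $R$-modules, so any homomorphism $f : I \to M$ extends along it to a homomorphism $g : R \to M$ by the defining property of injective modules. So this direction is essentially free.

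For the direction ``$\mu$-injective $\Rightarrow$ injective'' (using $\mu \geq \gamma(R)$): here I invoke Baer's Criterion \cite{baer}, which says that $M$ is injective iff for \emph{every} left ideal $I$ of $R$ and every homomorphism $f : I \to M$, $f$ extends to a homomorphism $g : R \to M$. So suppose $M$ is $\mu$-injective and let $I$ be an arbitrary left ideal and $f : I \to M$ a homomorphism. By the definition of $\gamma(R)$, every left ideal of $R$ is $(<\gamma(R))$-generated, hence $(<\mu)$-generated since $\mu \geq \gamma(R)$. Thus $I$ is $(<\mu)$-generated, and the defining property of $\mu$-injectivity gives the desired extension $g : R \to M$. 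By Baer's Criterion, $M$ is injective. The main (and only) subtlety is simply making sure the cardinal bookkeeping lines up, namely that ``$(<\gamma(R))$-generated'' implies ``$(<\mu)$-generated'' when $\mu \geq \gamma(R)$, which is immediate from the definition of $(<\kappa)$-generated. There is no real obstacle here; the result is a routine reformulation of Baer's Criterion in the language of $\mu$-injectivity.
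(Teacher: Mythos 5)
Your proof is correct and is exactly the argument the paper intends: the paper states this Fact without proof, noting only that it ``follows from Baer's Criterion for injectivity,'' which is precisely your reduction (the forward direction being trivial for any $\mu$, and the converse following because every ideal is $(<\gamma(R))$-generated, hence $(<\mu)$-generated, so $\mu$-injectivity supplies the extensions Baer's Criterion requires).
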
 

\begin{remark}\
\begin{itemize}
\item It is clear that if $\mu_1 < \mu_2$ and  $M$ is $\mu_2$-injective, then $M$ is $\mu_1$-injective. 
\item  $\aleph_1$-injective modules are sometimes referred to as $\aleph_0$-injective modules in the literature, see for example \cite{mom}.

\end{itemize}

\end{remark}

\begin{remark}
A model theoretic characterization of $\mu$-injective modules is presented in \cite[\S 4]{eksa}. We will not work with it in this paper, but we introduce it for the convenience of the model theoretic audience.  $M$ is $\mu$-injective if every $\mu$-divisible system in $M$ consistent with $M$ has a solution in $M$. A $\mu$-divisible system in $M$ consists of a set of formulas of the following form $\{ r_i x = a_i : i < \beta \}$ such that  $\beta < \mu$, and $a_i \in M$ and $r_i \in R$ for every $i < \beta$. A $\mu$-divisible system $\Delta$ is consistent with $M$ if there is $N$ and $n \in N$ such that $M \subseteq_R N$ and $n \in N$ is a solution of $\Delta$.

\end{remark}

\begin{fact}[{ \cite[3.10]{eksa}}]\label{al_0}
Every direct sum of $\aleph_0$-injective modules is $\aleph_0$-injective. 
\end{fact}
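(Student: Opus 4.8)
The plan is to reduce the statement to two elementary observations: that $\mu$-injectivity passes to arbitrary direct products for any infinite cardinal $\mu$, and that a homomorphism out of a \emph{finitely generated} ideal into a direct sum can only involve finitely many of the summands.

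First I would record that if $N_i$ is $\mu$-injective for each $i\in J$, then $\prod_{i\in J}N_i$ is $\mu$-injective: given a $(<\mu)$-generated ideal $I$ and a homomorphism $f\colon I\to\prod_{i\in J}N_i$, I would compose with the projections to get homomorphisms $f_i\colon I\to N_i$, extend each $f_i$ to $g_i\colon R\to N_i$ using the $\mu$-injectivity of $N_i$, and assemble the $g_i$ into a homomorphism $g\colon R\to\prod_{i\in J}N_i$ extending $f$. In particular, since a finite direct sum coincides with a finite direct product, any finite direct sum of $\aleph_0$-injective modules is again $\aleph_0$-injective.

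Next, let $M=\bigoplus_{i\in J}M_i$ with each $M_i$ an $\aleph_0$-injective module, let $I$ be a finitely generated (equivalently, $(<\aleph_0)$-generated) ideal, say $I=\langle a_1,\dots,a_n\rangle$, and let $f\colon I\to M$ be a homomorphism. Each $f(a_k)$ lies in the direct sum and hence has finite support, so $F:=\bigcup_{k=1}^{n}\operatorname{supp}(f(a_k))$ is a finite subset of $J$. Then $N:=\bigoplus_{i\in F}M_i$ is a submodule of $M$ containing every $f(a_k)$, and since the $a_k$ generate $I$ it follows that $f(I)\subseteq N$; thus $f$ corestricts to a homomorphism $\bar f\colon I\to N$. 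By the previous step $N$ is $\aleph_0$-injective, so $\bar f$ extends to some $g\colon R\to N$, and composing with the inclusion $N\hookrightarrow M$ gives the desired extension of $f$ to all of $R$. Hence $M$ is $\aleph_0$-injective.

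I do not expect a serious obstacle here; the only delicate point is the one that confines the argument to the level $\aleph_0$, namely that a finite generating set for $I$ forces $f(I)$ into a \emph{finite} sub-sum $N$, which is then a direct \emph{product} and therefore inherits injectivity from its factors. It is worth noting that the analogous closure under arbitrary direct sums fails for genuinely injective modules over non-noetherian rings, precisely because a homomorphism out of an infinitely generated ideal may spread across infinitely many summands and an infinite direct sum is not a direct product.
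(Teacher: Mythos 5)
Your argument is correct, and it is essentially the standard proof of this statement: the paper itself gives no proof, citing it as Fact 3.10 of Eklof--Sabbagh, and the reduction you use (finite support of the images of a finite generating set, corestriction to a finite sub-sum, which is a finite product and hence inherits $\aleph_0$-injectivity componentwise) is exactly the expected route. The closing remark correctly identifies why the argument is confined to finitely generated ideals and why closure under infinite direct sums fails for full injectivity over non-noetherian rings, which is precisely the phenomenon exploited in the paper's Lemma \ref{many-equiv}.
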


We finish this subsection by using parametrized injective modules to characterize noetherian rings. We will later use them to characterize parametrized noetherian rings. 

\begin{lemma}\label{many-equiv} The following are equivalent.
\begin{enumerate}
\item $R$ is left noetherian.
\item Every $\aleph_0$-injective modules is injective. 
\item Every $\aleph_0$-injective module is $\aleph_1$-injective. 
\item Direct sums of $\aleph_1$-injective modules are $\aleph_1$-injective.

\item For some $\kappa \geq \aleph_1$, direct sums of $\kappa$-injective modules are $\kappa$-injective. 
\item For some $\kappa \geq \aleph_1$, direct sums of countably many $\kappa$-injective modules are $\kappa$-injective.
\item For some $\kappa \geq \aleph_1$, direct sums of countably many $\kappa$-injective modules are $\aleph_1$-injective.
\item For every $\kappa \geq \aleph_1$, direct sums of $\kappa$-injective modules are $\kappa$-injective. 
\end{enumerate}
\end{lemma}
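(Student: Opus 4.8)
The plan is to prove the eight conditions equivalent via a cycle together with a few side implications, using Baer's criterion (the \textbf{Fact} after the definition of $\mu$-injective), Fact \ref{al_0}, and Proposition \ref{small-strict}. The anchor is that $R$ is left noetherian iff every left ideal is finitely generated, i.e.\ iff $\gamma(R) = \aleph_0$, which by the Baer-type \textbf{Fact} means $\aleph_0$-injective already implies injective.

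First I would record the easy implications. Since $\aleph_0 < \aleph_1$, any $\aleph_1$-injective module is $\aleph_0$-injective, so $(2) \Rightarrow (3)$ is immediate once we know $\aleph_1$-injective modules are injective; actually the cleanest route is $(1) \Rightarrow (2)$ directly from the \textbf{Fact} ($\gamma(R) = \aleph_0$), and $(2) \Rightarrow (3)$ is trivial because injective modules are $\mu$-injective for all $\mu$. For $(3) \Rightarrow (4)$: by Fact \ref{al_0} a direct sum of $\aleph_0$-injective modules is $\aleph_0$-injective; each $\aleph_1$-injective summand is in particular $\aleph_0$-injective, so the sum is $\aleph_0$-injective, and then $(3)$ upgrades it to $\aleph_1$-injective. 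The implications $(4) \Rightarrow (5)$, $(5) \Rightarrow (6)$, $(8) \Rightarrow (4)$ are trivial (specializing "every $\kappa$" / "all direct sums" to weaker statements), as is $(6) \Rightarrow (7)$ since $\kappa \geq \aleph_1$ means $\kappa$-injective implies $\aleph_1$-injective. So the substantive content is closing the loop back to $(1)$, for which it suffices to show $(7) \Rightarrow (1)$: if for \emph{some} $\kappa \geq \aleph_1$, countable direct sums of $\kappa$-injective modules are $\aleph_1$-injective, then $R$ is left noetherian.

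The heart of the matter is this contrapositive: if $R$ is \emph{not} left noetherian, then for every $\kappa \geq \aleph_1$ there is a countable family of $\kappa$-injective modules whose direct sum fails to be $\aleph_1$-injective. If $R$ is not noetherian there is a strictly increasing chain of left ideals $I_0 \subsetneq I_1 \subsetneq \cdots$ with union $I$; pick $a_n \in I_{n+1} \setminus I_n$, so $I$ is strictly countably generated (this is the $\mu = \aleph_1$ instance of the phenomenon in Proposition \ref{small-strict}, or just built by hand). For each $n$ let $E_n$ be the injective hull of $R/I_n$ — this is injective, hence $\kappa$-injective for every $\kappa$ — and set $E = \bigoplus_n E_n$. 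The standard argument (this is the module-theoretic proof that "noetherian iff direct sums of injectives are injective", adapted to the $\aleph_1$ level) is to define $f \colon I \to E$ by sending $a_n$ appropriately so that its $n$-th coordinate records the image of $a_n$ in $R/I_n \hookrightarrow E_n$; one checks $f$ is a well-defined homomorphism on the countably generated ideal $I$, and that no extension $g \colon R \to E$ exists because $g(1)$ would have only finitely many nonzero coordinates while $f(a_n)$ has nonzero $n$-th coordinate for all $n$. Since $I$ is countably generated, this witnesses the failure of $\aleph_1$-injectivity of $E$, proving $(7) \Rightarrow (1)$. Finally $(1) \Rightarrow (8)$: if $R$ is noetherian then by the \textbf{Fact} $\kappa$-injective $=$ injective for every $\kappa \geq \aleph_1 > \aleph_0 = \gamma(R)$, and arbitrary direct sums of injectives are injective over a noetherian ring (classical), hence $\kappa$-injective.

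The main obstacle is the construction in $(7) \Rightarrow (1)$: one must check carefully that the map $f$ on the chain-union ideal $I$ is well-defined and a homomorphism (the compatibility conditions coming from $R$-linear relations among the $a_n$), and that genuinely \emph{no} extension to $R$ exists — the finiteness of the support of $g(1)$ in $\bigoplus_n E_n$ against the infinite support of the system $\{f(a_n)\}_n$ is the crux. Everything else is bookkeeping among the trivial implications. I would present it as: prove $(1)\Rightarrow(2)\Rightarrow(3)\Rightarrow(4)$, note $(4)\Rightarrow(5)\Rightarrow(6)\Rightarrow(7)$ and $(8)\Rightarrow(4)$ are formal, prove $(7)\Rightarrow(1)$ by the contrapositive construction above, and prove $(1)\Rightarrow(8)$ from Baer plus the classical noetherian fact, which closes all loops.
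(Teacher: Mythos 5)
Your proposal is correct and follows essentially the same route as the paper: the easy implications via Fact \ref{al_0} and downward monotonicity of $\mu$-injectivity, and the crux $(7)\Rightarrow(1)$ by the same contrapositive construction with a strictly increasing chain of finitely generated ideals, the countably generated union mapped coordinatewise into a countable direct sum of (in your case injective, in the paper's case $\kappa$-injective) modules, and the finite-support contradiction for $g(1)$. The only cosmetic difference is that for $(8)$ the paper argues $(2)\Rightarrow(8)$ again via Fact \ref{al_0} rather than citing the classical Bass--Papp-type result, but both are valid.
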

\begin{proof}\
$(1) \Rightarrow (2)$: It follows from the fact that every ideal is finitely generated. 

$(2) \Rightarrow (3)$: Clear.

$(3) \Rightarrow (4)$: Let $\{ M_i : i \in I \}$ such that $M_i$ is $\aleph_1$-injective for every $i \in I$. Then $M_i$ is $\aleph_0$-injective for every $i \in I$. As $\aleph_0$-injective modules are closed under direct sums by Fact \ref{al_0}, $\bigoplus_{i \in I} M_i$ is $\aleph_0$-injective. Therefore, $\bigoplus_{i \in I} M_i$ is $\aleph_1$-injective by $(3)$.

$(4) \Rightarrow (5) \Rightarrow (6) \Rightarrow (7)$: Clear.

$(7) \Rightarrow (1)$: Let $\kappa \geq \aleph_1$ such that (7) holds.

 Assume that $R$ is not left noetherian, then there is an ideal $I$ which is not finitely generated. So we can build $\{J _n : n < \omega \}$ an strictly increasing chain of finitely generated ideals contained in $I$ and let $J = \bigcup_{n < \omega} J_n$. Observe that $J$ is a strictly countably generated ideal and that $J/J_n \neq 0$ for every $n < \omega$. 

For each $n < \omega$, let $M_n$ be a  $\kappa$-injective module and $s_n: J/J_n \to M_n$ be an embedding.  These exists because every module can be embedded into an injective module (see also Corollary \ref{ec}). Let $s: \bigoplus_{n < \omega} J/J_n \to \bigoplus_{n < \omega } M_n$ be given by $(a_n)_{n < \omega } \mapsto (s_n(a_n))_{n < \omega}$

Let $h: J \to \bigoplus_{n < \omega} J/J_n$ be given by $a \mapsto (\pi_n(a))_{n < \omega}$ where $\pi_n$ is the projection of $ J$ onto $J/ J_n$. Let $f = s \circ h : J \to \bigoplus_{n < \omega } M_n$. Then by (7), it follows that $\bigoplus_{n < \omega } M_n$ is $\aleph_1$-injective. As $J$ is countably generated, there is $g: R \to \bigoplus_{n < \omega } M_n$ such that $f \subseteq g$. 

Let $g(1)=(b_n)_{n < \omega }$. One can show that $b_n \neq 0$ for every $n < \omega$. This contradicts the fact that $g(1) \in  \bigoplus_{n < \omega } M_n$. Therefore, $R$ is left noetherian.

$(8) \Rightarrow (6)$: Clear. 

$(2) \Rightarrow (8)$: Similar to  $(3) \Rightarrow (4)$. 
\end{proof}

\begin{remark}\
\begin{itemize}
\item The equivalence between (1), (2) and (3) already appears in \cite[3.18, 3.24]{eksa}. 
\item The  equivalence between (1) and  (5) is a parametrized version of the classical result that a ring is left noetherian if and only if direct sums of injective modules are injective, see for example \cite[3.46]{lam2}
\end{itemize}

\end{remark}

\subsection{Abstract elementary classes of modules} Abstract elementary classes (AECs for short) were introduced by Shelah \cite{sh88} to study classes of structures axiomatized in infinitary logics. An AEC is a pair $\K=(K, \lea)$ where $K$ is a class of objects in a fixed language and $\lea$ is a partial order on $K$ extending the substructure relation such that $\K$ satisfies certain axioms. See for example \cite[4.1]{baldwinbook09} for the definition.

In this paper, we will only work with the abstract elementary class of left $R$-modules with embeddings and we will denote it by $(R\text{-Mod}, \subseteq_R)$.\footnote{The language of this AEC is  $\{0, +,-\} \cup \{ r\cdot   : r \in R \}$ where  $r \cdot$ is interpreted as multiplication by $r$ for each $r \in R$.}  Due to this, we will work purely algebraically and we will introduce AEC notions in the specific framework of the AEC of modules with embeddings. A more general introduction to AECs from an algebraic point of view is given in \cite{m4}.

Given a module $M$, $|M|$ is the underlying set of $M$ and $\| M\|$ is the cardinality of $M$. In the rest of the section, $M, N, \cdots$ are left $R$-modules and $\lambda \geq \operatorname{card}(R) + \aleph_0$.

\begin{nota}\
\begin{itemize}
\item  If $\lambda$ is a cardinal, then $R\text{-Mod}_{\lambda}=\{ M  : M \text{ is a module and }\| M \|=\lambda \}$ and $R\text{-Mod}_{\leq \lambda}=\{ M  : M \text{ is a module and }\| M \| \leq\lambda \}$
\item We write $f: M \xrightarrow[A]{} N$ if $f$ is a homomorphism and $f\rest_A = \id_A$. 
\end{itemize}

\end{nota}

\begin{defin}
$M$ is \emph{universal over} $N$ if and only if $\| M\|= \| N\|$, $N \subseteq_R M$
 and for any $N^* \in \Rm_{\| M \|}$ such that
$N \subseteq_R N^*$, there is $f: N^* \xrightarrow[N]{} M$ an embedding.
\end{defin}

Recall that an increasing chain  $\{ M_i : i < \alpha\}\subseteq R\text{-Mod}$ (for $\alpha$ an ordinal) is a \emph{continuous chain} if $M_i =\bigcup_{j < i} M_j$  for every $i < \alpha$ limit ordinal. We introduce limit models \cite{kosh}.

\begin{defin}\label{limit}
Let $\lambda$ be an infinite cardinal and $\alpha < \lambda^+$ be a limit ordinal.  $M$ is a \emph{$(\lambda,
\alpha)$-limit model over} $N$ if and only if there is $\{ M_i : i <
\alpha\}\subseteq \Rm_\lambda$ an increasing continuous chain such
that:
\begin{itemize}
\item $M_0 =N$ and  $M= \bigcup_{i < \alpha} M_i$, and
\item $M_{i+1}$ is universal over $M_i$ for each $i <
\alpha$.
\end{itemize}

$M$ is a $(\lambda, \alpha)$-limit model if there is $N \in
\Rm_\lambda$ such that $M$ is a $(\lambda, \alpha)$-limit model over
$N$. $M$ is a $\lambda$-limit model if there is a limit ordinal
$\alpha < \lambda^+$ such that $M$  is a $(\lambda,
\alpha)$-limit model.  $M$ is a limit model if there is an infinite cardinal $\lambda$ such that $M$ is a $\lambda$-limit model.

\end{defin}

\begin{fact}\label{same-cof} Let $\lambda$ be an infinite cardinal and $\alpha, \beta < \lambda^+$  limit ordinals.
If $M$ is a $(\lambda, \alpha)$-limit model, $N$ is a $(\lambda, \beta)$-limit model and $cf(\alpha)=cf(\beta)$, then $M$ and $N$ are isomorphic.
\end{fact}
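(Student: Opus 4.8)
The plan is to reduce to the case of a regular cofinality and then run a back-and-forth argument along the two witnessing chains. First I would check that if $P$ is a $(\lambda,\gamma)$-limit model then $P$ is also a $(\lambda,\cof(\gamma))$-limit model: given a witnessing continuous increasing chain $\{P_i:i<\gamma\}$ and a strictly increasing continuous cofinal sequence $\langle i_j:j<\cof(\gamma)\rangle$ in $\gamma$, the chain $\{P_{i_j}:j<\cof(\gamma)\}$ is again continuous increasing with union $P$, and $P_{i_{j+1}}$ is universal over $P_{i_j}$ because $P_{i_j+1}$ already is, $P_{i_j+1}\subseteq_R P_{i_{j+1}}$, and universality over a fixed base is preserved under passing to a larger model of the same size $\lambda$. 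Hence, writing $\theta:=\cof(\alpha)=\cof(\beta)$ (a regular cardinal $<\lambda^+$), it suffices to prove that any two $(\lambda,\theta)$-limit models $M=\bigcup_{i<\theta}M_i$ and $N=\bigcup_{i<\theta}N_i$ are isomorphic.

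For this I would recursively construct strictly increasing continuous sequences $\langle u_i:i<\theta\rangle$, $\langle v_i:i<\theta\rangle$ of ordinals below $\theta$ (using regularity of $\theta$ to keep the suprema taken at limit stages below $\theta$), embeddings $f_i:M_{u_i}\to N_{v_i}$, and at successor steps auxiliary embeddings $g_i:N_{v_i}\to M_{u_{i+1}}$, so that $g_i\circ f_i$ and $f_{i+1}\circ g_i$ are the relevant inclusion maps and $f_i=\bigcup_{j<i}f_j$ at limits; the two composition conditions force $f_i\subseteq f_{i+1}$, so the $f_i$ cohere. To start, I would take $u_0=0$ and let $f_0:M_0\to N_1$ be any embedding, one existing by amalgamating $M_0$ and $N_0$ over $0$ into a module of size $\lambda$ containing $N_0$ and using universality of $N_1$ over $N_0$. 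At a successor step, from $f_i$ I would form the pushout of $f_i$ along $M_{u_i}\subseteq_R M_{u_i+1}$, realize it as an extension of $M_{u_i+1}$ of cardinality $\lambda$, and embed it into $M_{u_i+2}$ over $M_{u_i+1}$ by universality; this yields $g_i:N_{v_i}\to M_{u_i+2}$ with $g_i\circ f_i$ an inclusion, and I set $u_{i+1}=u_i+2$. Symmetrically, pushing $g_i$ out along $N_{v_i}\subseteq_R N_{v_i+1}$ and embedding into $N_{v_i+2}$ over $N_{v_i+1}$ produces $f_{i+1}:M_{u_{i+1}}\to N_{v_i+2}$ with $f_{i+1}\circ g_i$ an inclusion, and I set $v_{i+1}=v_i+2$; all modules occurring have cardinality $\lambda$, so the universality hypotheses genuinely apply.

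Finally, since $\langle u_i\rangle$ and $\langle v_i\rangle$ are strictly increasing of length $\theta$ with values below $\theta$, they are cofinal in $\theta$, so $\bigcup_i M_{u_i}=M$ and $\bigcup_i N_{v_i}=N$; hence $f:=\bigcup_{i<\theta}f_i$ is an embedding of $M$ into $N$ whose image lies in $N$, and it is surjective because any $n\in N$ lies in some $N_{v_i}$ and then $n=f_{i+1}(g_i(n))\in\im(f)$. Thus $f$ witnesses $M\cong N$. The one point that needs care --- and the reason the auxiliary maps $g_i$ are present --- is surjectivity in the limit: a mere chain of forward embeddings $M_{u_i}\to N_{v_i}$ need not have surjective union, so the back-and-forth bookkeeping, together with keeping everything of cardinality $\lambda$ and (via regularity of $\theta$) keeping all indices below $\theta$, is the only real obstacle. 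This is the standard uniqueness-of-limit-models argument specialized to $(R\text{-Mod},\subseteq_R)$, which has amalgamation, joint embedding, and no maximal models.
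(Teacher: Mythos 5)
Your argument is correct: the reduction to the regular cofinality $\theta=\cof(\alpha)=\cof(\beta)$ and the back-and-forth construction of coherent embeddings $f_i$ and $g_i$ along the two witnessing chains (with the composition conditions forcing coherence and surjectivity of the union) is exactly the standard proof of uniqueness of limit models of the same cofinality. The paper states this as a Fact and gives no proof of its own, so there is nothing to compare against beyond noting that your write-up is the argument one finds in the AEC literature, correctly specialized to $(R\text{-Mod},\subseteq_R)$.
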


Due to the previous fact for every $\alpha < \lambda^+$ limit ordinal there is a unique $(\lambda, \alpha)$-limit model. 

Using limit models we introduce a non-standard definition of stability.\footnote{The following definition  is equivalent to the notion of stable in $\lambda$ in the sense of bounding the number of Galois types for AECs with amalgamation, joint embedding and no maximal models  \cite[2.9]{tamenessone}. $(R\text{-Mod}, \subseteq_R)$ has the three properties just mentioned, so it is equivalent in our setting.} 

\begin{defin} $(R\text{-Mod}, \subseteq_R)$ is stable in $\lambda$ if there is a $\lambda$-limit model.\end{defin}

Moreover, if $(R\text{-Mod}, \subseteq_R)$ is stable in $\lambda$, then for every $M  \in \Rm_\lambda$ there is $N \in \Rm_\lambda$ such that $N$ is universal over $M$ (see for example \cite[2.9]{tamenessone}). In particular, if $(R\text{-Mod}, \subseteq_R)$ is stable in $\lambda$, for every $\alpha < \lambda^+$ limit ordinal there is a $(\lambda, \alpha)$-limit model. 

\begin{fact}[{\cite[3.8]{m2}}]\label{st-no} Assume $\lambda \geq (\operatorname{card}(R) + \aleph_0)^+$.
  $(R\text{-Mod}, \subseteq_R)$ is stable in $\lambda$ if and only if $\lambda^{<\gamma(R)}=\lambda$. 
\end{fact}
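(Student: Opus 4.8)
The plan is to reduce the statement to counting Galois types and then to compute that count. By the footnote and \cite[2.9]{tamenessone}, $(\Rm,\subseteq_R)$ is stable in $\lambda$ if and only if for every module $M$ with $\|M\|\leq\lambda$ the Galois type space $\gS(M)$ over $M$ has size $\leq\lambda$ (it is enough to count types of single elements: the Galois type of a finite tuple over $M$ is determined by the type of its first coordinate over $M$ together with the type of the rest over $M+Ra_0$, and $\Rm$ has amalgamation, so $|\gS^{<\omega}(M)|\leq\lambda$ follows by induction once $|\gS(M')|\leq\lambda$ for all $M'$ of size $\leq\lambda$). The first step is to establish the key correspondence: $\gS(M)$ is in bijection with the set of pairs $(I,f)$, where $I$ is a left ideal of $R$ and $f\colon I\to M$ is an $R$-homomorphism. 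Given such a pair, form the pushout $M_{I,f}:=(M\oplus R)/\langle (f(r),-r):r\in I\rangle$; then $M\subseteq_R M_{I,f}$, and the image $\bar a$ of $(0,1)$ satisfies $\{r\in R:r\bar a\in M\}=I$ and $r\bar a=f(r)$ for $r\in I$. Conversely, the Galois type of $a$ over $M$ determines the ideal $I_a:=\{r\in R:ra\in M\}$ and the homomorphism $f_a\colon r\mapsto ra$; and if $a,a'$ realize the same pair $(I,f)$, then $M_{I,f}$ embeds over $M$ into the ambient models sending $\bar a$ to $a$ and to $a'$ respectively, so amalgamating shows $\gtp(a/M)=\gtp(a'/M)$.

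For the direction $\lambda^{<\gamma(R)}=\lambda\Rightarrow$ stable, I would count these pairs over a fixed $M$ with $\|M\|\leq\lambda$. Every left ideal is $(<\gamma(R))$-generated, hence determined by a subset of $R$ of size $<\gamma(R)$, so there are at most $\operatorname{card}(R)^{<\gamma(R)}\leq\lambda^{<\gamma(R)}=\lambda$ left ideals (using $\operatorname{card}(R)\leq\lambda$). For a fixed ideal $I$, a homomorphism $f\colon I\to M$ is determined by its values on a generating set of $I$ of size $<\gamma(R)$, so there are at most $\|M\|^{<\gamma(R)}\leq\lambda$ of them. Thus $|\gS(M)|\leq\lambda\cdot\lambda=\lambda$, and $(\Rm,\subseteq_R)$ is stable in $\lambda$.

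For the converse, assume $\lambda^{<\gamma(R)}>\lambda$, and fix a (necessarily infinite) cardinal $\mu<\gamma(R)$ with $\lambda^{\mu}>\lambda$; note $\mu\leq\operatorname{card}(R)<\lambda$ since $\gamma(R)\leq\operatorname{card}(R)^{+}$. As $\mu<\gamma(R)$ there is a left ideal that is not $(<\mu)$-generated, and from it I would extract a sufficiently irredundant generating sequence $\langle a_i:i<\mu\rangle$ of a subideal $I'$, arranging that the left ideal $K$ generated by all ring coefficients occurring in $R$-linear relations $\sum_{i\in F}r_ia_i=0$ (with $F\subseteq\mu$ finite) is proper. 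Granting $K\neq R$, set $M:=(R/K)^{(\lambda)}$, a module of size $\lambda$; since $K$ annihilates $M$, every function $\{a_i:i<\mu\}\to M$ extends to an $R$-homomorphism $I'\to M$ (all relations are killed), so $|\operatorname{Hom}_R(I',M)|\geq\|M\|^{\mu}=\lambda^{\mu}>\lambda$. By the correspondence above, $M$ carries more than $\lambda$ Galois types, hence has no universal extension of size $\lambda$, so $(\Rm,\subseteq_R)$ is not stable in $\lambda$.

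The main obstacle is this last step: securing a $\mu$-generated ideal whose generators have a proper relation ideal $K$, or, failing that, handling the remaining cases by a variant of the argument — for instance, if $R$ has more than $\lambda$ left ideals one is done immediately since $\{(I,0):I\leq R\}\subseteq\gS(M)$ for every $M$; and if $2^{\mu}>\lambda$ one can instead use the $2^{\mu}$ pairwise distinct subideals $\langle a_i:i\in S\rangle$, $S\subseteq\mu$, of an irredundant sequence. Assembling these cases into a uniform construction is where the real work lies; everything else is a routine use of the pairs-versus-types correspondence together with cardinal arithmetic.
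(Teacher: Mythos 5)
The paper states this result as a Fact, citing \cite[3.8]{m2}, and gives no proof, so I compare your argument against what a complete proof requires. Your reduction of stability to counting Galois types (via the footnote and \cite[2.9]{tamenessone}), and your identification of $\gS(M)$ with the set of pairs $(I,f)$, $I$ a left ideal and $f\colon I\to M$ a homomorphism --- using the pushout $M_{I,f}$ for realizability and amalgamation for well-definedness and injectivity --- are correct. Your proof of the direction ``$\lambda^{<\gamma(R)}=\lambda$ implies stable'' is complete: there are at most $\operatorname{card}(R)^{<\gamma(R)}\leq\lambda^{<\gamma(R)}=\lambda$ left ideals, each is $(<\gamma(R))$-generated, and each admits at most $\lambda^{<\gamma(R)}=\lambda$ homomorphisms into $M$.

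The gap you flag in the converse is genuine, and the main route you propose cannot be repaired. You want a strictly $\mu$-generated ideal whose generators have a proper relation ideal $K$, so that maps into $(R/K)^{(\lambda)}$ extend freely. But for the ring of Example \ref{ls} the ideal $\langle x^{>\alpha}\rangle$ is strictly $\aleph_0$-generated and is a union of a chain of principal ideals, so any two generators satisfy a relation one of whose coefficients is a unit; hence $K=R$ for \emph{every} choice of generating sequence and your target module is $0$. Your fallbacks also fail there: that ring has only $2^{\aleph_0}$ left ideals, and the subideals $\langle a_i : i\in S\rangle$ depend only on $\min S$, so neither rescues the case $2^{\aleph_0}\leq\lambda<\lambda^{\aleph_0}$. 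The missing idea is to argue from stability itself rather than hunting for a free target: take $\mu<\gamma(R)$ regular and minimal with $\lambda^{\mu}>\lambda$, a strictly $\mu$-generated ideal $J=\bigcup_{\alpha<\mu}J_\alpha$ with $a_\alpha\notin J_\alpha$ (Proposition \ref{small-strict}), and the $(\lambda,\mu)$-limit model $N=\bigcup_{\alpha<\mu}N_\alpha$, then build a tree $\langle f_\eta : \eta\in{}^{<\mu}\lambda\rangle$ of homomorphisms $f_\eta\colon J_{\operatorname{lh}(\eta)}\to N_{\operatorname{lh}(\eta)}$, branching $\lambda$-fold at each successor by amalgamating $\lambda$ copies of the pushout of Proposition \ref{disjoint-ap} over $f_\eta$ and embedding the result into $N_{\operatorname{lh}(\eta)+1}$ by universality; the pairwise distinct values at $a_{\operatorname{lh}(\eta)}$ separate the branches, giving $\lambda^{\mu}>\lambda$ homomorphisms $J\to N$ and hence more than $\lambda$ Galois types over $N$, a contradiction. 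This is a branching version of the construction in Lemma \ref{non-inj}; the target module must come from the class's own universal objects, not from generators and relations of the ideal alone.
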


It follows from the previous result that for any ring $R$ there are always $2^{\operatorname{card}(R) + \aleph_0}$-limit models.

Finally, we say that \emph{$M$ is universal in $\Rm_\lambda$}  if $\| M\| = \lambda$ and for every $N \in \Rm_\lambda$ there is an embedding $f:N\rightarrow M$. 

\begin{fact}[{\cite[2.10]{maz20}}]\label{uni-l} Let $\lambda$ be an infinite cardinal. If $M$ is a $\lambda$-limit model, then $M$ is universal in $\Rm_\lambda$.
\end{fact}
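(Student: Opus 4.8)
The plan is to use only the first successor step of the defining chain, together with a direct-sum trick to reduce ``universal in $\Rm_\lambda$'' to ``universal over $M_0$''. Write $M$ as a $(\lambda,\alpha)$-limit model, so there is an increasing continuous chain $\{M_i : i < \alpha\}\subseteq \Rm_\lambda$ with $M=\bigcup_{i<\alpha}M_i$ and $M_{i+1}$ universal over $M_i$ for each $i<\alpha$.

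First I would verify $\|M\|=\lambda$. Since $\alpha<\lambda^+$ we have $|\alpha|\le\lambda$, and each $M_i$ has cardinality $\lambda$, so $\|M\|\le |\alpha|\cdot\lambda=\lambda$; conversely $M_0\subseteq_R M$ gives $\|M\|\ge\lambda$. Next, for universality, fix an arbitrary $N\in\Rm_\lambda$ and look for an embedding $N\to M$. Because $\alpha$ is a limit ordinal we have $0<\alpha$, so $M_1$ is defined and is universal over $M_0$, with $\|M_1\|=\|M_0\|=\lambda$. Form the module $M_0\oplus N$: it has cardinality $\lambda$ and contains a copy of $M_0$ as a submodule (the first coordinate), so $M_0\subseteq_R M_0\oplus N$ and $\|M_0\oplus N\|=\lambda=\|M_1\|$. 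Applying the definition of ``$M_1$ universal over $M_0$'' to $N^\ast:=M_0\oplus N$ yields an embedding $h\colon M_0\oplus N\xrightarrow[M_0]{}M_1$. Composing the coordinate embedding $N\hookrightarrow M_0\oplus N$, $n\mapsto(0,n)$, with $h$ and then with the inclusion $M_1\subseteq_R M$ produces an embedding $N\to M$, which is what was required.

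The argument is essentially routine and has no serious obstacle; the only points worth flagging are that a single successor step of the limit chain already suffices (one never needs the homogeneity built into longer chains for this direction), and that $M_0\oplus N$ is the correct auxiliary object to bridge between ``universal over $M_0$'' and ``embeds every module of size $\lambda$''. One should also note in passing that $\alpha$ being a limit ordinal is exactly what guarantees the index $1$ lies below $\alpha$, so that $M_1$ exists.
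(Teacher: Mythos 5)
Your argument is correct and is essentially the standard proof of this fact (the paper only cites it from [Maz20, 2.10]; the argument there is the same joint-embedding-plus-universality-over-$M_0$ mechanism, with $M_0\oplus N$ witnessing joint embedding). The only point to make explicit is that $M_0\oplus N$ contains $M_0$ only up to the identification $m\mapsto(m,0)$, so one should replace $M_0\oplus N$ by an isomorphic copy literally extending $M_0$ before invoking the definition of ``universal over'' --- exactly the renaming convention recorded at the end of Section 2.
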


We will often use that if $f: M \to N$ is an embedding then there is $M'$ and $g: M' \cong N$ such that $M \subseteq_R M'$ and $f \rest M = g \rest M$. 

\section{Main results}

\subsection{Algebraic results} We begin by showing a technical result that we will use a couple of times in the paper.
\begin{prop}\label{disjoint-ap} If $M_1, M_2, N \in \Rm_{\leq \lambda}$, $f: M_1 \to N$ is a homomorphism and $M_1 \subseteq_R M_2$, then there are $L \in \Rm_\lambda$ and $g: M_2 \to N$ a homomorphism such that the following diagram commutes:

  \[
 \xymatrix{\ar @{} [dr]  M_2 \ar[r]^{g}  & L \\
 M_1 \ar[u]^{\subseteq_R} \ar[r]^{f} &  N \ar[u]_{\subseteq_R } 
 }
\]

  and $g[M_2] \cap N = f[M_1] (= g[M_1])$.
  
  Moreover, if $f$ is an embedding, then $g$ is an embedding.
\end{prop}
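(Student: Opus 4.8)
The plan is to realize the desired $L$ and $g$ as (a copy of) the pushout of the span $N \xleftarrow{\,f\,} M_1 \hookrightarrow M_2$ in $\Rm$, enlarged if necessary so that it has cardinality exactly $\lambda$. Concretely, first I would set $P := (M_2 \oplus N)/D$ where $D := \{ (m, -f(m)) : m \in M_1 \}$, which is a submodule of $M_2 \oplus N$ since $f$ is a homomorphism. Writing $\iota_{M_2} : M_2 \to P$, $m \mapsto (m,0) + D$ and $\iota_N : N \to P$, $n \mapsto (0,n) + D$, one checks at once that $\iota_{M_2}\rest M_1 = \iota_N \circ f$, so the square commutes once we adopt this convention.

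Next I would verify that $\iota_N$ is injective: if $(0,n) \in D$ then $(0,n) = (m,-f(m))$ for some $m \in M_1$, which forces $m = 0$ and hence $n = 0$. (This is the standard fact that a pushout of a monomorphism along an arbitrary map is a monomorphism in an abelian category.) Invoking the remark at the end of Section 2.2, I may then replace $P$ by an isomorphic module $L_0$ that literally contains $N$ as a submodule, with $g_0 : M_2 \to L_0$ the corresponding homomorphism; thus $N \subseteq_R L_0$ and $g_0 \rest M_1 = f$. The disjointness is then a direct computation: if $x \in g_0[M_2] \cap N$, lift to $P$ as $x = (m,0)+D = (0,n)+D$ with $m \in M_2$, $n \in N$, so $(m,-n) \in D$, i.e. $(m,-n) = (m',-f(m'))$ for some $m' \in M_1$; hence $m = m' \in M_1$ and $n = f(m')$, giving $x = g_0(m') \in g_0[M_1] = f[M_1]$. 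The reverse inclusion $f[M_1] \subseteq g_0[M_2] \cap N$ is immediate since $g_0 \rest M_1 = f$ and $f[M_1] \subseteq N$. For the ``moreover'' clause, if $f$ is injective and $g_0(m) = 0$, i.e. $(m,0) \in D$, then $(m,0) = (m',-f(m'))$ for some $m' \in M_1$, so $f(m') = 0$, hence $m' = 0$ and $m = 0$; thus $g_0$ is an embedding.

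Finally, to obtain $\|L\| = \lambda$: since $\|M_1\|, \|M_2\|, \|N\| \leq \lambda$ we have $\|L_0\| \leq \lambda$, so I would set $L := L_0 \oplus F$ with $F$ a free module of rank $\lambda$ (so $\|L\| = \lambda$, using $\lambda \geq \operatorname{card}(R) + \aleph_0$) and let $g : M_2 \to L$ be $g_0$ followed by the inclusion $L_0 \hookrightarrow L$. Commutativity of the square and the embedding property of $g$ are clearly preserved, and since $g[M_2], N \subseteq L_0$, the intersection $g[M_2] \cap N$ computed in $L$ still equals $f[M_1]$. I do not expect a genuine obstacle here: the whole proof is the pushout construction together with the elementary verifications above, and the only points needing a little care are the injectivity of $\iota_N$ (so that $N$ really sits inside $L$) and the routine cardinality padding.
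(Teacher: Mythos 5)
Your proof is correct and follows essentially the same route as the paper's: both realize $L$ as the pushout $(N \oplus M_2)/\{(f(m),-m) : m \in M_1\}$ of the span $N \xleftarrow{f} M_1 \hookrightarrow M_2$, check that the canonical map from $N$ is injective so that $P$ can be replaced by an isomorphic copy containing $N$, and verify the intersection condition and the ``moreover'' clause by the same direct computations. The only addition is your padding by a free module of rank $\lambda$ to force $\|L\|=\lambda$ exactly, a detail the paper leaves implicit.
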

\begin{proof} We first take the pushout of $(f: M_1 \to N, i: M_1 \hookrightarrow M_2)$, i.e., let $P=(N \oplus M_2)/ \{ (f(m), -m ) : m \in M_1\}$, $g_1: N \to P$ given by $n_1 \mapsto [(n_1, 0)]$, and $g_2: M_2 \to P$ given $m_2 \mapsto [(0, m_2)] )$. One can show that $g_1 \circ f = g_2\rest_{M_1}$, $g_1$ is an embedding and  $g_1[N] \cap g_2[M_2] = g_1 \circ f[M_1]$.

Let $L$ and $h$ be such that the following diagram commutes:

\[
 \xymatrix{\ar @{} [dr]  L   \ar[r]_h^{\cong} & P \\
 N \ar[u]^{\subseteq_R} \ar[r]_{g_1}^{\cong} &   g_1[N]  \ar[u]_{\subseteq_R}
 }
\]

Let $g = h^{-1} \circ g_2 : M_2 \to L$. One can show that $g$ is as required. 

For the moreover part, if $f$ is an embedding, $g_2$ is an embedding and hence $g$ is an embedding. \end{proof}

We show a relation between limit models and parametrized injective modules. This result extends \cite[3.7]{m2}.

\begin{lemma}\label{k-inj} Assume $\delta < \lambda^+$ limit ordinal.
If $M$ is a $(\lambda, \delta)$-limit model, then $M$ is $\text{cf}(\delta)$-injective. In particular if $\text{cf}(\delta) \geq \gamma(R)$, then $M$ is injective. 
\end{lemma}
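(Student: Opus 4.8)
The plan is to show directly that $M$ satisfies the defining lifting property for $\mathrm{cf}(\delta)$-injectivity: given an ideal $I$ generated by fewer than $\mathrm{cf}(\delta)$ elements and a homomorphism $f\colon I \to M$, I need to extend $f$ to $g\colon R \to M$. The key observation is that $M = \bigcup_{i<\delta} M_i$ where the chain $\{M_i : i<\delta\}$ is increasing and continuous with each $M_{i+1}$ universal over $M_i$. Since $I$ is $(<\mathrm{cf}(\delta))$-generated, pick a generating set $\{a_j : j<\mu\}$ with $\mu<\mathrm{cf}(\delta)$; then $f[I]$ is generated as an $R$-module by the finitely... by the $\mu$-many elements $\{f(a_j): j<\mu\}$, each of which lies in some $M_{i_j}$. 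Because $\mu < \mathrm{cf}(\delta)$, the ordinal $i^* = \sup_{j<\mu} i_j$ is still below $\delta$, so $f$ factors through $M_{i^*}$, i.e. $f\colon I \to M_{i^*}$.

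\textbf{Key step: using universality.} Now consider the module $N = (M_{i^*} \oplus R)/\{(f(a),-a) : a \in I\}$, the pushout of $f\colon I \to M_{i^*}$ along the inclusion $I \hookrightarrow R$; by Proposition \ref{disjoint-ap} (applied with $M_1 = I$, $M_2 = R$, $N = M_{i^*}$) we may arrange that $M_{i^*} \subseteq_R N$ with $\|N\| \leq \lambda$ (note $\|R\| \le \operatorname{card}(R)+\aleph_0 \le \lambda$ and $\|M_{i^*}\| = \lambda$), and that the image of $R$ in $N$ realizes the extension of $f$ we want — that is, there is $\tilde g\colon R \to N$ with $\tilde g\rest I = f$. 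Since $i^* < \delta$, the successor $i^*+1 < \delta$, and $M_{i^*+1}$ is universal over $M_{i^*}$; by definition of universality there is an embedding $h\colon N \xrightarrow[M_{i^*}]{} M_{i^*+1}$. Then $g = h \circ \tilde g\colon R \to M_{i^*+1} \subseteq_R M$ extends $f$, since $g\rest I = h\circ \tilde g \rest I = h \circ f = f$ (the last equality because $f$ has image in $M_{i^*}$ and $h$ fixes $M_{i^*}$). This witnesses $\mathrm{cf}(\delta)$-injectivity of $M$.

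\textbf{The "in particular" clause and the main obstacle.} Once $M$ is shown to be $\mathrm{cf}(\delta)$-injective, if $\mathrm{cf}(\delta) \geq \gamma(R)$ then by the Fact following Baer's Criterion (every ideal is $(<\gamma(R))$-generated, so $\mathrm{cf}(\delta)$-injective implies injective when $\mathrm{cf}(\delta) \ge \gamma(R)$) we conclude $M$ is injective. I expect the main obstacle to be the bookkeeping around the pushout: one must check that Proposition \ref{disjoint-ap} really does produce a module $N$ of size $\leq \lambda$ containing $M_{i^*}$ in which the pushout copy of $R$ gives an honest extension of $f$, and that the composite map lands correctly. A subtle point worth stating carefully is why $\sup_{j<\mu} i_j < \delta$: this is exactly where regularity of $\mathrm{cf}(\delta)$ is used — $\mu < \mathrm{cf}(\delta)$ and $\mathrm{cf}(\delta)$ is regular, so a sup of $\mu$-many ordinals each below $\delta$ cannot reach cofinally into $\delta$. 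Everything else is a routine assembly of the universality axiom with the pushout construction already established in Proposition \ref{disjoint-ap}.
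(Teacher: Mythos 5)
Your proposal is correct and follows essentially the same route as the paper: locate an index $i^*<\delta$ with $f[I]\subseteq_R M_{i^*}$ using that a $(<\mathrm{cf}(\delta))$-sized set of indices is bounded below $\delta$, push out $f$ along $I\hookrightarrow R$ via Proposition \ref{disjoint-ap} to get an extension into some $L\supseteq_R M_{i^*}$ of size $\lambda$, and then embed $L$ into $M_{i^*+1}$ over $M_{i^*}$ by universality. The only cosmetic remark is that the bound $\sup_{j<\mu} i_j<\delta$ needs nothing beyond the definition of cofinality (a subset of $\delta$ of size $<\mathrm{cf}(\delta)$ is bounded); invoking regularity of $\mathrm{cf}(\delta)$ is harmless but not the point.
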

\begin{proof} Let $I$  be an ideal which is $(<\text{cf}(\delta))$-generated and $f: I \to M$ be a homomorphism. 

Let  $I = \langle \{ a_\alpha : \alpha < \mu \} \rangle$ for $\mu < \text{cf}(\delta)$. Fix $\{ M_i : i < \delta \}$ a witness to the fact that $M$ is a $(\lambda, \delta)$-limit model. As $\mu  < \text{cf}(\delta)$ there is $j <\delta$ such that $f[I] \subseteq_R M_j$. Then $I \subseteq_R R$ and $f: I \to M_j$ is a homomorphism. So there is $L \in \Rm_\lambda$ and $f_1$ such that the following diagram commutes:

  \[
 \xymatrix{\ar @{} [dr]  R \ar[r]^{f_1}  & L \\
 I \ar[u]^{\subseteq_R} \ar[r]^{f} &  M_{j} \ar[u]_{\subseteq_R } 
 }
\]

by Proposition \ref{disjoint-ap}.

Since $M_{j+1}$ is universal over $M_j$ there is $h: L \xrightarrow[M_j]{} M_{j+1}$. It is easy to show that $g = h \circ f_1: R \to M$ is such that $g \rest I = f \rest I$. Therefore, $M$ is $\text{cf}(\delta)$-injective.

\end{proof}

\begin{remark}
In particular, all limit models are $\aleph_0$-injective.
\end{remark}

\begin{lemma}\label{non-inj} Assume $(\Rm, \subseteq_R)$ is stable in $\lambda$ and $\kappa \leq \lambda$.
If $I$ is a strictly $\kappa$-generated ideal, then the $(\lambda, \kappa)$-limit model is not $\kappa^+$-injective. 
\end{lemma}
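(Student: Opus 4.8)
The plan is to contradict $\kappa^+$-injectivity directly by exhibiting a homomorphism from a $(<\kappa^+)$-generated ideal into the $(\lambda,\kappa)$-limit model that cannot be extended to $R$. Let $M$ denote the $(\lambda,\kappa)$-limit model, witnessed by an increasing continuous chain $\{M_i : i < \kappa\}$ with $M_{i+1}$ universal over $M_i$. Since $I$ is strictly $\kappa$-generated, write $I = \langle \{a_\alpha : \alpha < \kappa\}\rangle$ with $a_\alpha \notin I_\alpha := \langle\{a_\beta : \beta < \alpha\}\rangle$ for every $\alpha < \kappa$, as in the construction in Proposition \ref{small-strict}. The quotients $I/I_\alpha$ are all nonzero. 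The idea, mirroring the $(7)\Rightarrow(1)$ argument in Lemma \ref{many-equiv}, is to build $f : I \to M$ so that the ``component of $f$ coming from $I/I_\alpha$'' lands strictly outside $M_\alpha$ (or more precisely cannot be captured by any single $M_j$), so that a putative extension $g : R \to M$ would have $g(1)$ not lying in any $M_j$ — contradicting $M = \bigcup_{j<\kappa} M_j$.

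Concretely, first I would observe that each $I/I_\alpha$, being a module of size $\le \operatorname{card}(R) \le \lambda$, embeds into some module of size $\lambda$, and since $M$ is universal in $\Rm_\lambda$ by Fact \ref{uni-l}, $I/I_\alpha$ embeds into $M$; by stability and the genericity of limit models I can arrange such an embedding $s_\alpha : I/I_\alpha \to M$ whose image is not contained in $M_\alpha$ (if it were, pass to a later stage: since $M_{\alpha+1}$ is universal over $M_\alpha$ and $I/I_\alpha \ne 0$, one can realize a copy of $I/I_\alpha$ meeting $M_{\alpha+1}\setminus M_\alpha$). Composing the projection $\pi_\alpha : I \to I/I_\alpha$ with $s_\alpha$ and amalgamating these over $\alpha < \kappa$ into a single map $f : I \to M$ — here using Proposition \ref{disjoint-ap} repeatedly, or a direct-sum-style construction followed by re-embedding into $M$ — gives the desired homomorphism. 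Now suppose toward a contradiction that $M$ is $\kappa^+$-injective; since $I$ is $(<\kappa^+)$-generated, $f$ extends to $g : R \to M$. Then $g(1) \in M_j$ for some $j < \kappa$ because $M = \bigcup_{i<\kappa} M_i$ and $\kappa$ is regular. But for $\alpha = j$ we have $a_\alpha \in I$, so $f(a_\alpha) = g(a_\alpha) = a_\alpha \cdot g(1) \in M_j = M_\alpha$, whereas the construction forced the $(I/I_\alpha)$-component of $f(a_\alpha)$ to be nonzero and to witness escape from $M_\alpha$ — contradiction.

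The main obstacle is the bookkeeping in the amalgamation step: I need to combine the $\kappa$-many maps $s_\alpha \circ \pi_\alpha$ into one honest homomorphism $f : I \to M$ while preserving, for each $\alpha$, the property that $f(a_\alpha) \notin M_\alpha$. The clean way is to first build $f' : I \to \bigoplus_{\alpha<\kappa} I/I_\alpha$ by $a \mapsto (\pi_\alpha(a))_{\alpha<\kappa}$ — note $\pi_\alpha(a) = 0$ for $a \in I_\alpha$, but $\pi_\alpha(a_\alpha)\ne 0$, and one must check the support of $f'(a)$ is a set, which it is since each $a\in I$ lies in some $I_\beta$ with $\beta<\kappa$, so only coordinates $\alpha > \beta$... actually one needs $a \in I_\beta$ giving $\pi_\alpha(a)=0$ for $\alpha \ge \beta$, hence $f'(a)$ has support contained in $\beta$, a set — good, so $f'$ genuinely lands in the direct sum. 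Then embed $\bigoplus_{\alpha<\kappa} I/I_\alpha$ (a module of size $\le \lambda$) into $M$ via universality, arranging by a back-and-forth along the chain that the $\alpha$-th summand's generator image escapes $M_\alpha$; composing gives $f$. With $f$ in hand the contradiction is exactly as above: an extension $g$ has $g(1)$ at some finite... some bounded stage $M_j$, but $a_j g(1) = f(a_j)$ has nonzero $j$-th component outside $M_j$, which is impossible since $a_j g(1) \in M_j$. I would also double-check the edge case where $\kappa = \aleph_0$, where the argument specializes to the known $(7)\Rightarrow(1)$ computation of Lemma \ref{many-equiv}.
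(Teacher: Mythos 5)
Your overall strategy --- produce a homomorphism $f : I \to M$ whose values on the generators $a_\alpha$ escape the filtration cofinally, so that any extension $g : R \to M$ is trapped by $g(1) \in M_j$ --- is the right one, and your final contradiction is essentially the paper's. But the route you commit to has two genuine gaps. First, the map $f' : I \to \bigoplus_{\alpha<\kappa} I/I_\alpha$, $a \mapsto (\pi_\alpha(a))_{\alpha<\kappa}$, is not well defined once $\kappa > \aleph_0$: for $a = a_\omega$ we have $a_\omega \notin I_\alpha$ for every $\alpha \le \omega$, so $f'(a_\omega)$ has infinite support and does not lie in the direct sum. (In the $(7)\Rightarrow(1)$ argument of Lemma \ref{many-equiv} the supports are finite only because the chain there has length $\omega$.) Having support ``contained in $\beta$, a set'' is not enough; you would have to replace the direct sum by the image of $f'$ inside the product, at which point the direct-sum bookkeeping you were relying on disappears.

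Second, and more seriously, the step where you embed the target module into $M$ ``arranging by a back-and-forth along the chain that the $\alpha$-th summand's generator image escapes $M_\alpha$'' is precisely the heart of the proof, and it does not follow from universality or ``genericity'' of a fixed chain. Universality only gives you \emph{some} embedding, with no control over which elements avoid $M_\alpha$; and even if each $s_\alpha(\pi_\alpha(a_\alpha))$ individually escapes $M_\alpha$, the combined value $f(a_\alpha)$ is a sum of contributions from several summands and can land back inside $M_\alpha$. The paper's proof resolves exactly this point by constructing the chain $\{M_\alpha : \alpha<\kappa\}$ and partial embeddings $f_\alpha : I_\alpha \to M_\alpha$ \emph{simultaneously} by transfinite recursion, applying the pushout of Proposition \ref{disjoint-ap} at each successor step to force $f_{\alpha+1}[I_{\alpha+1}] \cap M_\alpha = f_\alpha[I_\alpha]$ (whence $f(a_\alpha) \notin M_\alpha$, since $f$ is an embedding and $a_\alpha \notin I_\alpha$), and then identifying $\bigcup_{\alpha<\kappa} M_\alpha$ with the $(\lambda,\kappa)$-limit model by uniqueness. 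You mention ``using Proposition \ref{disjoint-ap} repeatedly'' only as an aside, but that is the version of the argument that actually works; the direct-sum shortcut does not close the gap.
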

\begin{proof}
Let $\{a_\alpha : \alpha <\kappa \} \subseteq I$ such that:

\begin{enumerate}
\item $I = \langle \{ a_\alpha  : \alpha <\kappa \} \rangle$.
\item   for every $\alpha < \kappa$, $a_{\alpha } \notin I_\alpha := \langle \{ a_\beta : \beta < \alpha  \} \rangle$. 
\end{enumerate} 

This can be achieved since $I$ is a strictly $\kappa$-generated ideal. 

We build $\{ M_\alpha : \alpha < \kappa\} \subseteq \Rm_\lambda$ an increasing continuous chain and $\{ f_\alpha : \alpha < \kappa \}$ such that:

\begin{enumerate}
\item for every $\alpha < \kappa$,  $M_{\alpha +1}$ is universal over $M_\alpha$.
\item for every $\alpha < \kappa$, $f_{\alpha} : I_\alpha \to M_\alpha$ is an embedding.
\item if $\alpha < \beta < \kappa$, then $f_\alpha \subseteq f_\beta$.
\item for every $\alpha < \kappa$, the following diagram commutes
  \[
 \xymatrix{\ar @{} [dr]  I_{\alpha+1} \ar[r]^{f_{\alpha +1}}  & M_{\alpha + 1} \\
 I_\alpha \ar[u]^{\subseteq_R} \ar[r]^{f_{\alpha}} &  M_\alpha \ar[u]_{\subseteq_R} 
 }
\]

and $f_{\alpha +1}[I_{\alpha +1}] \cap M_{\alpha} = f_\alpha [I_\alpha]$.
\end{enumerate}

\fbox{Enough} Let $M = \bigcup_{\alpha < \kappa} M_\alpha$ and $f =  \bigcup_{\alpha < \kappa} f_\alpha$. $M$ is the $(\lambda, \kappa)$-limit model by Condition (1). Assume for the sake of contradiction that $M$ is $\kappa^+$-injective. Then there is $g$ such that the following diagram commutes:

\[\begin{tikzcd}
I \arrow[r, hook] \arrow[d, "f"'] & R \arrow[ld, "g", dashed] \\
M                                 &                          
\end{tikzcd}\]

Then $g(1) \in M =  \bigcup_{\alpha < \kappa} M_\alpha$, so there is $\alpha < \kappa$ such that $g(1) \in M_\alpha$. Now, $g(a_\alpha)  = f_{\alpha +1}(a_\alpha)$ as $a_\alpha \in I_{\alpha + 1}$ and $f \subseteq g$, so $g(a_\alpha) \in f_{\alpha + 1}[I_{\alpha+1}]$. Moreover, $g(a_\alpha)= a_\alpha g(1) \in M_\alpha$ as $g$ is a homomorphism and $g(1) \in M_\alpha$. So $g(a_\alpha) \in  f_{\alpha + 1}[I_{\alpha+1}] \cap M_\alpha$. Then by Condition (4) there is $b \in I_\alpha$ such that $g(a_\alpha)= f_\alpha(b)$. As $f \subseteq g$, $f(a_{\alpha}) = f(b)$. Since $f$ is an embedding by Condition (2), $a_\alpha = b$. This is a contradiction as $a_\alpha \notin I_\alpha$ and $b \in I_\alpha$. 

\fbox{Construction} In the base step take $M_0 \in \Rm_\lambda$ such that $I_0 \subseteq_R M_0$ and let $f_0 = id_{I_0}$ and in the limit steps take unions, so we are left with the successor steps.  Let $\alpha = \beta +1$. Applying Proposition \ref{disjoint-ap} to $f_\alpha: I_\alpha \to M_\alpha$ and $I_\alpha \subseteq_R I_{\alpha +1}$, there are  $L \in \Rm_\lambda$ and $g: I_{\alpha +1}  \to L$ such that the following diagram commutes:

  \[
 \xymatrix{\ar @{} [dr]  I_{\alpha +1} \ar[r]^{g}  & L \\
 I_\alpha \ar[u]^{\subseteq_R} \ar[r]^{f_\alpha} &  M_\alpha \ar[u]_{\subseteq_R } 
 }
\]

  and $g[I_{\alpha +1}] \cap M_\alpha = f_\alpha [I_\alpha]$.
  
  Since $(\Rm, \subseteq_R)$ is stable in $\lambda$, there is $M_{\alpha +1} \in \Rm_\lambda$ such that $M_{\alpha +1}$ is universal over $L$.   Let $f_{\alpha + 1}  = g: I_{\alpha +1} \to M_{\alpha + 1}$. It is clear that $f_{\alpha +1}$ is the required homomorphism and one can show that $M_{\alpha +1}$ is universal over $M_{\alpha}$ using Proposition \ref{disjoint-ap}.
\end{proof}

We show that limit models can be used to characterize parametrized noetherian rings. 

\begin{theorem}\label{i-eq}Assume $\kappa$ is a regular cardinal. The following are equivalent.
\begin{enumerate}
\item $R$ is left $(<\kappa)$-noetherian.
\item The $(\lambda, \kappa)$-limit model is $\kappa^+$-injective for every $\lambda$ such that $(\Rm, \subseteq_R)$ is stable in $\lambda$ and $ \kappa \leq \lambda$. 
\item The $(\lambda, \kappa)$-limit model is $\kappa^+$-injective for some $\lambda$ such that $(\Rm, \subseteq_R)$ is stable in $\lambda$ and $ \kappa \leq \lambda$.
\end{enumerate}
\end{theorem}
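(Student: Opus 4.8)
The plan is to prove the cycle of implications $(1) \Rightarrow (2) \Rightarrow (3) \Rightarrow (1)$, leaning on the two algebraic lemmas just established (Lemma~\ref{k-inj} and Lemma~\ref{non-inj}) together with Proposition~\ref{small-strict}. The implication $(2) \Rightarrow (3)$ is essentially free: since $R$ has arbitrarily large cardinals $\lambda$ with $\lambda^{<\gamma(R)} = \lambda$ (for instance $\lambda = 2^{\operatorname{card}(R) + \aleph_0}$), Fact~\ref{st-no} guarantees stability in some $\lambda \geq \kappa$, so there is at least one $\lambda$ witnessing the existential statement in $(3)$. (One should note here that stability in $\lambda$ gives a genuine $(\lambda,\kappa)$-limit model since $\kappa < \lambda^+$, and uniqueness is Fact~\ref{same-cof} as $\kappa$ is regular.)

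For $(1) \Rightarrow (2)$, fix $\lambda$ with $(\Rm, \subseteq_R)$ stable in $\lambda$ and $\kappa \leq \lambda$, and let $M$ be the $(\lambda, \kappa)$-limit model. I want to show $M$ is $\kappa^+$-injective, i.e.\ that every homomorphism $f : I \to M$ from a $(<\kappa^+)$-generated ideal $I$ extends to $R$. By Lemma~\ref{k-inj}, $M$ is $\operatorname{cf}(\kappa)$-injective; but $\kappa$ is regular, so $\operatorname{cf}(\kappa) = \kappa$, giving $\kappa$-injectivity. This does not immediately hand us $\kappa^+$-injectivity, so the extra input is hypothesis $(1)$: since $R$ is left $(<\kappa)$-noetherian, \emph{every} left ideal is $(<\kappa)$-generated, in particular every $(<\kappa^+)$-generated ideal is already $(<\kappa)$-generated. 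Hence $\kappa$-injectivity of $M$ is the same as $\kappa^+$-injectivity of $M$, and we are done. (If $\kappa = \aleph_0$ this is just the statement that a left noetherian ring makes $\aleph_0$-injective modules $\aleph_1$-injective, consistent with Lemma~\ref{many-equiv}.)

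For $(3) \Rightarrow (1)$, I argue contrapositively: suppose $R$ is not left $(<\kappa)$-noetherian, so there is a left ideal $I$ that is not $(<\kappa)$-generated. I need to produce, for \emph{every} $\lambda$ witnessing stability with $\kappa \leq \lambda$, an ideal that is strictly $\kappa$-generated so that Lemma~\ref{non-inj} applies and kills $\kappa^+$-injectivity of the $(\lambda,\kappa)$-limit model. The subtlety is that $I$ itself need not be strictly $\kappa$-generated --- it could require many more than $\kappa$ generators. This is where Proposition~\ref{small-strict} comes in, but applied carefully: first, since $I$ is not $(<\kappa)$-generated, $\gamma(R) > \kappa$, so there is \emph{some} strictly $\theta$-generated ideal for some $\theta \geq \kappa$ (take a minimal-length generating sequence of $I$ and truncate appropriately, or simply note $\gamma(R) = \theta^+$ or $\gamma(R)$ a limit above $\kappa$ forces such a $\theta$). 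Then, since $\kappa$ is regular and $\kappa \leq \theta$, Proposition~\ref{small-strict} yields a strictly $\kappa$-generated ideal $J \subseteq$ that ideal. Now Lemma~\ref{non-inj} says the $(\lambda,\kappa)$-limit model is not $\kappa^+$-injective for every $\lambda$ with $(\Rm,\subseteq_R)$ stable in $\lambda$ and $\kappa \leq \lambda$ --- contradicting $(3)$.

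The main obstacle I anticipate is the bookkeeping in $(3) \Rightarrow (1)$: going from ``$R$ is not left $(<\kappa)$-noetherian'' to ``there is a \emph{strictly} $\kappa$-generated ideal'' is not a one-line deduction. One has to first extract an ideal whose generation degree is a well-defined cardinal $\theta$ with $\kappa \leq \theta$ (using that the minimal number of generators of a non-$(<\kappa)$-generated ideal is $\geq \kappa$, and passing to an initial segment of a generating sequence to make the degree \emph{exactly} some regular-or-successor value, or directly invoking that $\gamma(R) > \kappa$ forces a strictly $\theta$-generated ideal for some $\theta$ with $\kappa \le \theta$), and only then apply Proposition~\ref{small-strict} to descend to degree exactly $\kappa$. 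The regularity hypothesis on $\kappa$ is used twice --- once to convert $\operatorname{cf}(\kappa)$-injectivity into $\kappa$-injectivity in $(1)\Rightarrow(2)$, and once as the hypothesis of Proposition~\ref{small-strict} in $(3)\Rightarrow(1)$ --- so it should be flagged at both points.
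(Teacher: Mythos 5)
Your proposal is correct and follows essentially the same route as the paper: $(1)\Rightarrow(2)$ via Lemma~\ref{k-inj} plus the observation that $(<\kappa)$-noetherian collapses $\kappa$-injectivity and $\kappa^+$-injectivity, and $(3)\Rightarrow(1)$ by extracting a strictly $\kappa$-generated ideal (via Proposition~\ref{small-strict}) and invoking Lemma~\ref{non-inj}. The extra care you take in passing from ``not $(<\kappa)$-generated'' to ``strictly $\theta$-generated for some $\theta\geq\kappa$'' is a legitimate small gap that the paper's one-line citation of Proposition~\ref{small-strict} glosses over, and your handling of it is fine.
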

\begin{proof}

$(1) \Rightarrow (2)$:  Let $M$ be the $(\lambda, \kappa)$-limit model. $M$ is $\kappa$-injective by Lemma \ref{k-inj}. Hence $M$ is $\kappa^+$-injective as every ideal is $(<\kappa)$-generated.

$(2) \Rightarrow (3)$: Clear.

$(3) \Rightarrow (1)$: Assume for the sake of contradiction that $R$ is not left $(<\kappa)$-noetherian. Then there is $I$ a strictly $\kappa$-generated ideal by Proposition \ref{small-strict}. Let $\lambda \geq \kappa$ and $M$ be the $(\lambda, \kappa)$-limit model. $M$ is $\kappa^+$-injective by assumption. This is a contradiction as  $M$ is not $\kappa^+$-injective by Lemma \ref{non-inj}. 
\end{proof}

A similar argument to that of Theorem \ref{i-eq} can be used to characterize parametrized noetherian rings via parametrized injective modules.

\begin{lemma}\label{equiv}Assume $\kappa$ is a regular cardinal. The following are equivalent.
\begin{enumerate}
\item $R$ is left $(<\kappa)$-noetherian.
\item  Every $\kappa$-injective module is injective. 
\item Every $\kappa$-injective module is $\kappa^+$-injective. 
\end{enumerate}
\end{lemma}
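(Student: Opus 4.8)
The plan is to prove this by mirroring the structure of Lemma \ref{many-equiv} and Theorem \ref{i-eq}, adapting the noetherian arguments to the parametrized $(<\kappa)$-noetherian setting. The implication $(2) \Rightarrow (3)$ is immediate, since injective modules are $\mu$-injective for every $\mu$ by the remark following the definition of $\mu$-injective. For $(1) \Rightarrow (2)$: if $R$ is left $(<\kappa)$-noetherian, then every left ideal is $(<\kappa)$-generated, so a $\kappa$-injective module satisfies the lifting property against every ideal and is therefore injective by Baer's Criterion (the Fact following the definition of $\mu$-injective). So the content is entirely in $(3) \Rightarrow (1)$.

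For $(3) \Rightarrow (1)$, I would argue by contraposition. Suppose $R$ is not left $(<\kappa)$-noetherian; then there is an ideal that is not $(<\kappa)$-generated, and since $\kappa$ is regular one can extract a strictly $\kappa$-generated ideal $I$ (this is where regularity of $\kappa$ enters — either directly or via Proposition \ref{small-strict} applied to a witnessing ideal). Fix generators $\{a_\alpha : \alpha < \kappa\} \subseteq I$ with $a_\alpha \notin I_\alpha := \langle a_\beta : \beta < \alpha\rangle$ for every $\alpha < \kappa$, as in the proof of Lemma \ref{non-inj}. The goal is to build a $\kappa$-injective module $M$ together with a homomorphism $f: I \to M$ that does not extend to $R$, witnessing that $M$ is not $\kappa^+$-injective (note $I$ is $(<\kappa^+)$-generated). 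The natural candidate is $M = \bigoplus_{\alpha < \kappa} I/I_\alpha$ — or rather each $I/I_\alpha$ embedded into an injective hull $M_\alpha$ — with $f: I \to \bigoplus_{\alpha<\kappa} M_\alpha$ given by $a \mapsto (s_\alpha(\pi_\alpha(a)))_{\alpha<\kappa}$, exactly as in the $(7) \Rightarrow (1)$ argument of Lemma \ref{many-equiv}, except now the direct sum is over $\kappa$ many indices rather than countably many. If $f$ extended to $g: R \to \bigoplus_{\alpha<\kappa} M_\alpha$, then $g(1)$ would have support bounded by some $\gamma < \kappa$ (here we again use that $\kappa$ is regular, or rather just that $g(1)$ lies in a direct sum so has finite support — actually finite support suffices), yet $g(a_\alpha) = a_\alpha g(1)$ would force the $\alpha$-th coordinate to equal $s_\alpha(\pi_\alpha(a_\alpha))$, which is nonzero because $a_\alpha \notin I_\alpha$, for every $\alpha$ — contradiction with $g(1)$ having bounded (indeed finite) support.

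The one genuine obstacle is verifying that the module $M = \bigoplus_{\alpha<\kappa} M_\alpha$ is $\kappa$-injective. Each $M_\alpha$ is injective hence $\kappa$-injective, but $\kappa$-injective modules need not be closed under arbitrary direct sums in general — that closure is exactly what Lemma \ref{many-equiv} shows is equivalent to $R$ being noetherian. Here, however, we only need closure under direct sums of $\kappa$-injective modules when testing against $(<\kappa)$-generated ideals: given a $(<\kappa)$-generated ideal $J$ and a homomorphism $J \to \bigoplus_{\alpha<\kappa} M_\alpha$, since $J$ has fewer than $\kappa$ generators and $\kappa$ is regular, the image of $J$ lands in a sub-sum $\bigoplus_{\alpha \in S} M_\alpha$ for some $S \subseteq \kappa$ with $|S| < \kappa$ — wait, this is not quite right either since the images of the generators each have finite support, so their union has size $< \kappa$ — thus the image lies in a finite-support-bounded sub-sum, call it $M'$. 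One then needs $M'$ to be injective (or at least $\kappa$-injective). If $S$ is finite this is clear since finite direct sums of injectives are injective; if $|S|$ is infinite but $< \kappa$, one invokes Fact \ref{al_0} only for the $\aleph_0$-case, so instead I would use Baer's criterion directly: a homomorphism from the $(<\kappa)$-generated ideal $J$ into $\bigoplus_{\alpha<\kappa} M_\alpha$ factors, via the finitely-supported images of the $<\kappa$ generators, through a finite sub-sum $\bigoplus_{\alpha \in S_0} M_\alpha$ with $S_0$ finite, which is injective, giving the required extension to $R$. Making this factorization argument precise — that the image of a $(<\kappa)$-generated submodule into an arbitrary direct sum lands in a finite sub-sum — is the technical heart, and it does use that $\kappa$ is regular together with the finite-support property of direct sums. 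I would state this as a small auxiliary claim and then conclude exactly as above.
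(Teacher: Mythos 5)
Your treatment of $(1)\Rightarrow(2)$ and $(2)\Rightarrow(3)$ is correct and matches the paper. For $(3)\Rightarrow(1)$ the paper's own (sketched) proof simply reuses the machinery of Theorem \ref{i-eq}: from the failure of $(<\kappa)$-noetherianity one extracts a strictly $\kappa$-generated ideal via Proposition \ref{small-strict}, and then the $(\lambda,\kappa)$-limit model is $\kappa$-injective by Lemma \ref{k-inj} but not $\kappa^+$-injective by Lemma \ref{non-inj}. Your substitute construction by a direct sum has a genuine gap once $\kappa>\aleph_0$, which is the only case with new content. First, the map $f\colon I\to\bigoplus_{\alpha<\kappa}M_\alpha$, $a\mapsto(s_\alpha(\pi_\alpha(a)))_{\alpha<\kappa}$, is not well-defined: since $a_\beta\notin I_\alpha$ for every $\alpha\le\beta$, the element $f(a_\beta)$ has nonzero coordinates at all $\alpha\le\beta$, hence infinite support as soon as $\beta\ge\omega$; $f$ lands in the product, not the direct sum. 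Second, and more seriously, the auxiliary claim you identify as the technical heart --- that a homomorphism from a $(<\kappa)$-generated ideal into a direct sum factors through a \emph{finite} sub-sum --- is false: an ideal with infinitely many (but $<\kappa$) generators can send its generators to elements with pairwise disjoint supports, so the image is only contained in a sub-sum of size $<\kappa$, and over a non-noetherian ring infinite direct sums of injectives need not be injective (this is exactly what $(7)\Rightarrow(1)$ of Lemma \ref{many-equiv} exploits). Indeed your candidate module is provably not $\kappa$-injective: $I_\omega=\bigcup_{n<\omega}I_n$ is strictly countably generated, the map $I_\omega\to\bigoplus_{\alpha<\kappa}M_\alpha$, $a\mapsto(s_\alpha(\pi_\alpha(a)))_\alpha$, is legitimately of finite support on $I_\omega$, and the same diagonal argument shows it does not extend to $R$; so $\bigoplus_{\alpha<\kappa}M_\alpha$ fails $\aleph_1$-injectivity.

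The construction can be repaired without limit models: replace the direct sum by the submodule of $\prod_{\alpha<\kappa}M_\alpha$ consisting of elements whose support has size $<\kappa$ (equivalently is bounded below $\kappa$, as $\kappa$ is regular). Then $f$ is well-defined; the image of any $(<\kappa)$-generated ideal lies in a full product $\prod_{\alpha\in S}M_\alpha$ with $|S|<\kappa$, which is injective since products of injectives are injective, so the module is $\kappa$-injective; and any extension $g$ of $f$ to $R$ would have $g(1)$ supported below some $\gamma<\kappa$, whence the $\gamma$-th coordinate of $g(a_\gamma)=a_\gamma g(1)$ is $0$ while that of $f(a_\gamma)$ is $s_\gamma(a_\gamma+I_\gamma)\ne 0$. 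As written, however, your proof of $(3)\Rightarrow(1)$ does not go through.
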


\begin{remark}
The equivalence between (1) and (2) of Lemma \ref{equiv} generalizes \cite[3.18]{eksa}. Moreover, in \cite[Remark 2]{ekl} it is mentioned that the equivalence between (1) and (2) for an arbitrary regular cardinal $\kappa$ can be obtained using the methods of \cite[Theorem 2]{ekl}. \cite{ekl} does not provide an argument and the ideas used in that paper and this paper are very different. 
\end{remark}

The following follows directly from Theorem \ref{i-eq} and Fact \ref{uni-l}.

\begin{cor}\label{ec} Assume $\kappa$ is a regular cardinal.
If $R$ is not left $(<\kappa)$-noetherian, then every module $M$ can be embedded into a $\kappa$-injective module which is not $\kappa^+$-injective. 
\end{cor}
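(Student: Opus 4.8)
The plan is to exhibit the required module as an appropriate $(\lambda,\kappa)$-limit model and then to embed $M$ into it using Fact \ref{uni-l}.

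First I would fix a cardinal $\lambda$ that is large enough and closed under the relevant cardinal operation. Concretely, put $\mu := \|M\| + \kappa + (\operatorname{card}(R)+\aleph_0)^+$ and $\lambda := \mu^{<\gamma(R)}$. Then $\lambda \geq \mu$, so $\lambda \geq \|M\|$, $\lambda \geq \kappa$, and $\lambda \geq (\operatorname{card}(R)+\aleph_0)^+$; moreover $\lambda^{<\gamma(R)} = \lambda$ by elementary cardinal arithmetic, hence $(\Rm, \subseteq_R)$ is stable in $\lambda$ by Fact \ref{st-no}. Since $\kappa \leq \lambda$ and $\kappa$ is a limit ordinal, the $(\lambda, \kappa)$-limit model $N$ exists.

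Next I would read off the two injectivity properties of $N$. Because $\kappa$ is regular, $\operatorname{cf}(\kappa) = \kappa$, so Lemma \ref{k-inj} gives that $N$ is $\kappa$-injective. On the other hand, $R$ is not left $(<\kappa)$-noetherian, so clause $(3)$ of Theorem \ref{i-eq} must fail for this particular $\lambda$; that is, $N$ is not $\kappa^+$-injective. (Equivalently, one could extract a strictly $\kappa$-generated ideal from the failure of $(<\kappa)$-noetherianity --- using regularity of $\kappa$ and the argument behind Proposition \ref{small-strict} --- and then apply Lemma \ref{non-inj} directly.)

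Finally I would produce the embedding. Let $M'$ be any module with $M \subseteq_R M'$ and $\|M'\| = \lambda$, for example $M' = M \oplus F$ where $F$ is a free module with $\|F\| = \lambda$ (such $F$ exists since $\lambda \geq \operatorname{card}(R) + \aleph_0$), so that $\|M'\| = \|M\| + \lambda = \lambda$. By Fact \ref{uni-l}, $N$ is universal in $\Rm_\lambda$, hence there is an embedding $M' \hookrightarrow N$, and restricting it to $M$ yields an embedding $M \hookrightarrow N$. Thus $M$ embeds into the $\kappa$-injective, non-$\kappa^+$-injective module $N$, as desired. The only mildly technical point is the bookkeeping in the choice of $\lambda$ (simultaneously $\geq \|M\|$, $\geq \kappa$, above $(\operatorname{card}(R)+\aleph_0)^+$, and closed under $(-)^{<\gamma(R)}$) together with the harmless padding of $M$ up to size exactly $\lambda$; past that, the statement is a direct combination of Lemma \ref{k-inj}, Theorem \ref{i-eq}, and Fact \ref{uni-l}, so I do not anticipate a real obstacle.
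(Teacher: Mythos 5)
Your argument is correct and is essentially the proof the paper intends: the corollary is presented as following directly from Theorem \ref{i-eq} (equivalently, Lemma \ref{non-inj} via Proposition \ref{small-strict}) together with Lemma \ref{k-inj} for $\kappa$-injectivity and the universality of limit models from Fact \ref{uni-l}, which is exactly the combination you use, with the padding of $M$ up to size $\lambda$ being routine. The one point to tighten is your choice $\lambda=\mu^{<\gamma(R)}$: the identity $\left(\mu^{<\gamma(R)}\right)^{<\gamma(R)}=\mu^{<\gamma(R)}$ is ``elementary cardinal arithmetic'' only when $\gamma(R)$ is regular, and nothing in the hypotheses rules out a singular $\gamma(R)$ (cf.\ Problem \ref{p2}); this is harmless, since taking instead $\lambda=2^{\mu}$ gives $\lambda^{<\gamma(R)}\leq\lambda^{\operatorname{card}(R)+\aleph_0}=\lambda$ because $\gamma(R)\leq(\operatorname{card}(R)+\aleph_0)^{+}$, so stability in $\lambda$ follows from Fact \ref{st-no} without any side condition.
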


 The existence of a strictly $\aleph_\omega$-injective module is still not known.

\begin{probl}[{ \cite[Remark 2.(iii)]{ekl}}]
Show that there are rings with a $\aleph_\omega$-injective module which is not $\aleph_{\omega+1}$-injective. 
\end{probl}

A natural idea would be to construct $\kappa$-injective envelopes as an analogue of injective envelopes. The following result together with \cite{gks} shows that this is impossible for an envelope with reasonable properties.
\begin{lemma}
Assume $\kappa$ is regular. If $R$ is not left $(<\kappa)$-noetherian, then  are $M, N$ $\kappa$-injective modules and $f: M \to N$ and $g: N \to M$ embeddings such that $M$ is not isomorphic to $N$ 
\end{lemma}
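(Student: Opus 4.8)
The plan is to realize $M$ as a sufficiently large limit model and $N$ as its injective hull: universality of limit models (Fact~\ref{uni-l}) yields an embedding $N\to M$, the inclusion yields an embedding $M\to N$, and the fact that $N$ is injective while $M$ is not forces $M\not\cong N$. By Bumby's theorem this is the only source of such a pair, so it is the natural thing to look for.

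First I would fix an infinite cardinal $\lambda$ with $\lambda\geq\kappa$, $\lambda\geq\operatorname{card}(R)^{+}$ and $\lambda^{\operatorname{card}(R)+\aleph_{0}}=\lambda$; for instance $\lambda=2^{\kappa+\operatorname{card}(R)+\aleph_{0}}$ works. Since $\gamma(R)\leq\operatorname{card}(R)^{+}$ we have $\lambda^{<\gamma(R)}\leq\lambda^{\operatorname{card}(R)+\aleph_{0}}=\lambda$, so $(\Rm,\subseteq_{R})$ is stable in $\lambda$ by Fact~\ref{st-no}. As $\kappa$ is a limit ordinal with $\kappa\leq\lambda<\lambda^{+}$, the $(\lambda,\kappa)$-limit model exists; call it $M$. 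Because $\kappa$ is regular, $\operatorname{cf}(\kappa)=\kappa$, so $M$ is $\kappa$-injective by Lemma~\ref{k-inj}. Because $R$ is not left $(<\kappa)$-noetherian, clause (1) of Theorem~\ref{i-eq} fails, hence so does clause (3); applied to our $\lambda$ (which is stable and $\geq\kappa$), the negation of (3) says that $M$ is not $\kappa^{+}$-injective, and in particular $M$ is not injective, since an injective module is clearly $\mu$-injective for every $\mu$. Finally $\|M\|=\lambda$ and, by Fact~\ref{uni-l}, $M$ is universal in $\Rm_{\lambda}$.

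Next I would set $N=E(M)$, the injective hull of $M$, and let $f\colon M\to N$ be the inclusion. Using the classical bound $\|E(A)\|\leq\|A\|^{\operatorname{card}(R)+\aleph_{0}}$ for every module $A$ with $\|A\|\geq\aleph_{0}$ (for instance, $A$ embeds over $R$ into $\operatorname{Hom}_{\mathbb{Z}}(R,D)$, where $D$ is a divisible hull of the underlying abelian group of $A$, and this is an injective $R$-module of the required size), we get $\lambda=\|M\|\leq\|N\|\leq\lambda^{\operatorname{card}(R)+\aleph_{0}}=\lambda$, so $N\in\Rm_{\lambda}$. Since $M$ is universal in $\Rm_{\lambda}$, there is an embedding $g\colon N\to M$. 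As $N$ is injective it is $\kappa$-injective, whereas $M$ is $\kappa$-injective but not injective, so $M\not\cong N$. Hence $M,N,f,g$ are as required.

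Every step is a direct application of results already in the paper, except the cardinality bound on the injective hull; its role is precisely to keep $N$ inside $\Rm_{\lambda}$ so that the universality of the limit model applies, and since this bound is standard I do not expect a genuine obstacle. The hypothesis that $\kappa$ is regular is used twice: to ensure via Lemma~\ref{k-inj} that $M$ is $\kappa$-injective (not merely $\operatorname{cf}(\kappa)$-injective), and to invoke the equivalence $(1)\Leftrightarrow(3)$ of Theorem~\ref{i-eq}.
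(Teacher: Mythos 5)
Your proof is correct and follows essentially the same route as the paper: take $M$ to be the $(\lambda,\kappa)$-limit model, which is $\kappa$-injective by Lemma~\ref{k-inj} but not $\kappa^{+}$-injective (via Lemma~\ref{non-inj}, equivalently the failure of clause (3) of Theorem~\ref{i-eq}), and pair it with an injective module $N$ of the same cardinality, getting the mutual embeddings from universality (Fact~\ref{uni-l}). The only difference is the choice of $N$: you take $N=E(M)$ and must import the standard bound $\ccard{E(M)}\le\ccard{M}^{\operatorname{card}(R)+\aleph_{0}}$ to keep $N$ inside $\Rm_{\lambda}$, whereas the paper takes $N$ to be the $(\lambda,\gamma(R)^{+})$-limit model, which is injective by Lemma~\ref{k-inj} and lies in $\Rm_{\lambda}$ by definition, so no external cardinality estimate is needed. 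Both choices work, and your cardinality bound is correct as stated.
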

\begin{proof}
Let $\lambda \geq \kappa + (\operatorname{card}(R) + \aleph_0)^+$ such that $\lambda^{<\gamma(R)} = \lambda$. Let $M$ be the $(\lambda, \kappa)$-limit model and $N$ be the $(\lambda, \gamma(R)^+)$-limit model. $M$ and $N$ can be embedded into one another by Fact \ref{uni-l}. They are not isomorphic as $M$ is not $\kappa^+$-injective by Lemma \ref{non-inj} but $N$ is $\kappa^+$-injective by Lemma \ref{k-inj}.
\end{proof}

\begin{remark}
The previous result also shows that the classical result for injective modules of Bumby \cite{bumby}  (Fact \ref{inj-iso}) fails for $\kappa$-injective modules.
\end{remark}

\subsection{The number of limit models} We focus on counting the number of limit models. The following result for injective modules will be useful.

\begin{fact}[{\cite{bumby}}]\label{inj-iso}  If $f: A \to B$, $g: B \to A$ are embeddings and $A, B $ are injective, then $A$ is isomorphic to $B$.
\end{fact}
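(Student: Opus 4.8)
The plan is to run a Schröder--Bernstein-type argument adapted to injective modules: injectivity is used first, cheaply, to turn the two embeddings into direct-summand decompositions, and then, more carefully, to manufacture an ``absorbing'' injective module that is a direct summand of both $A$ and $B$. One should be aware that the naive Eilenberg swindle is circular here --- Schröder--Bernstein genuinely fails for arbitrary modules --- so injectivity must enter in an essential way.

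First I would note that $f(A)\cong A$ is injective and lies inside the injective module $B$, hence is a direct summand: $B\cong A\oplus V$ with $V$ injective. Symmetrically $A\cong B\oplus U$ with $U$ injective. Putting these together, $A\cong A\oplus W$ and $B\cong B\oplus W$, where $W:=U\oplus V$ is injective.

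Next I would bring in $T:=E(W^{(\omega)})$, the injective hull of the countable direct sum of copies of $W$. Here it is essential to pass to the hull: $W^{(\omega)}$ itself need not be injective --- this is precisely the phenomenon studied in the paper. Since injective hulls commute with direct sums, $W$ is injective, and $W^{(\omega)}\cong W\oplus W^{(\omega)}\cong W^{(\omega)}\oplus W^{(\omega)}\cong U\oplus W^{(\omega)}$ (the last using $W=U\oplus V$ and $U\oplus U^{(\omega)}\cong U^{(\omega)}$), one gets at once $T\oplus T\cong T$, $W\oplus T\cong T$ and $U\oplus T\cong T$. Now, starting from $A\cong A\oplus W$ and iterating the decomposition inside $A$, one produces an internal direct sum $P=\bigoplus_{n<\omega}W_n\le A$ with each $W_n\cong W$, so $P\cong W^{(\omega)}$; as $A$ is injective, the injective hull of $P$ computed inside $A$ is a direct summand of $A$, giving $A\cong T\oplus A^{*}$ for some $A^{*}$. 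Hence $A\oplus T\cong (T\oplus T)\oplus A^{*}\cong T\oplus A^{*}\cong A$, and symmetrically $B\oplus T\cong B$. Finally, from $A\cong B\oplus U$ and $U\oplus T\cong T$ we obtain $A\oplus T\cong B\oplus(U\oplus T)\cong B\oplus T$, so
\[
A\;\cong\; A\oplus T\;\cong\; B\oplus T\;\cong\; B .
\]

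The hard part is not any of the individual isomorphisms --- each is a short check --- but seeing which auxiliary object makes the swindle non-circular: the module $T$ must absorb $W$, $U$ and a copy of itself, and it must sit as a direct summand of both $A$ and $B$; this is exactly what forces one to replace $W^{(\omega)}$ by its injective hull and to use injectivity of $A$ and $B$ to split that hull off. Once $T$ is in hand, together with the identities $A\oplus T\cong A$ and $B\oplus T\cong B$, the conclusion is immediate.
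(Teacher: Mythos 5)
Your argument is correct, and since the paper states this only as a cited Fact (Bumby's theorem) without proof, there is nothing internal to compare it to; what you wrote is essentially Bumby's original argument --- reduce to $A\cong A\oplus W$, plant an internal copy of $W^{(\omega)}$ inside $A$, and split off its injective hull $T$ as an absorbing summand. One caveat on wording: the blanket phrase ``injective hulls commute with direct sums'' is false for infinite direct sums over non-noetherian rings (that failure is precisely the theme of this paper), but every instance you actually use is of the two-term form $E(X\oplus Y)\cong E(X)\oplus E(Y)$, which does hold, so the proof stands; you should just state the finite case explicitly.
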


We show that long limit models are isomorphic.
\begin{lemma}\label{long-lim}
  If $M$ is a $(\lambda, \alpha)$-limit model, $N$ is a $(\lambda, \beta)$-limit model and $cf(\alpha),cf(\beta) \geq \gamma(R)$, then $M$ and $N$ are isomorphic. 
\end{lemma}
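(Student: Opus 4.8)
The plan is to combine Lemma \ref{k-inj} with Bumby's theorem (Fact \ref{inj-iso}) and the universality of limit models (Fact \ref{uni-l}). The key observation is that when $cf(\alpha) \geq \gamma(R)$, Lemma \ref{k-inj} tells us that a $(\lambda,\alpha)$-limit model is not merely $cf(\alpha)$-injective but in fact injective outright, since $\gamma(R)$ is by definition the bound beyond which $\mu$-injectivity coincides with injectivity. So both $M$ and $N$ are injective modules of cardinality $\lambda$.

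First I would invoke Lemma \ref{k-inj} twice: since $cf(\alpha) \geq \gamma(R)$, the $(\lambda,\alpha)$-limit model $M$ is injective, and likewise since $cf(\beta) \geq \gamma(R)$, the $(\lambda,\beta)$-limit model $N$ is injective. Next I would use Fact \ref{uni-l}: both $M$ and $N$ are $\lambda$-limit models, hence each is universal in $\Rm_\lambda$. Since $\|M\| = \|N\| = \lambda$, universality of $N$ gives an embedding $f \colon M \to N$, and universality of $M$ gives an embedding $g \colon N \to M$. Now both modules are injective and embed into one another, so Bumby's theorem (Fact \ref{inj-iso}) yields $M \cong N$.

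I do not anticipate a genuine obstacle here; the proof is essentially an assembly of the three cited results. The only point requiring a moment's care is confirming that $cf(\alpha) \geq \gamma(R)$ really does force full injectivity via Lemma \ref{k-inj} — but that lemma states precisely this ("In particular if $\text{cf}(\delta) \geq \gamma(R)$, then $M$ is injective"), so no extra work is needed. One should also note that we do not need to assume stability in $\lambda$ as a separate hypothesis: the existence of the $(\lambda,\alpha)$-limit model and the $(\lambda,\beta)$-limit model is already built into the statement, and Fact \ref{uni-l} applies to any $\lambda$-limit model.

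The complete argument is thus: by Lemma \ref{k-inj} both $M$ and $N$ are injective; by Fact \ref{uni-l} both are universal in $\Rm_\lambda$, so they embed into one another; by Fact \ref{inj-iso} they are isomorphic.
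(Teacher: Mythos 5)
Your proof is correct and is exactly the argument the paper gives: Lemma \ref{k-inj} makes both limit models injective since $cf(\alpha), cf(\beta) \geq \gamma(R)$, Fact \ref{uni-l} provides mutual embeddings via universality in $\Rm_\lambda$, and Bumby's theorem (Fact \ref{inj-iso}) finishes. No gaps.
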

\begin{proof}
$M$ and $N$ are injective by Lemma \ref{k-inj}. Since $M$ and $N$ are universal in $\Rm_\lambda$ by Fact \ref{uni-l}, there are $f: M \to N$ and $g: N \to M$ embeddings. Therefore, $M$ is isomorphic to $N$ by Fact \ref{inj-iso}.
\end{proof} 

We provide an upper bound to the number of limit models. In the following results, given an ordinal $\delta$ (possibly finite), $|\delta| + 1$ and $|\delta| +2$ should be interpreted as cardinal arithmetic addition.

\begin{lemma}\label{above} Let $\delta$ be an ordinal (possibly finite) such that $\aleph_\delta$ is regular and $\lambda$ be a cardinal such that $(\Rm, \subseteq_R)$ is stable in $\lambda$ and $\lambda \geq \aleph_{\delta}$. 
If $R$ is $(< \aleph_\delta)$-noetherian, then there at most $|\delta| + 1$ non-isomorphic $\lambda$-limit models.
\end{lemma}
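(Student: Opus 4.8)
The plan is to classify the $\lambda$-limit models by a single cardinal invariant, their \emph{cofinality}, and then to bound the number of values this invariant can take on pairwise non-isomorphic models. By Fact~\ref{same-cof}, every $\lambda$-limit model is a $(\lambda,\alpha)$-limit model for some limit ordinal $\alpha<\lambda^{+}$, and its isomorphism type depends only on $\cof(\alpha)$, which is an infinite regular cardinal $\le\lambda$ (since $\cof(\alpha)\le|\alpha|\le\lambda$). Thus it suffices to bound the number of isomorphism types among the models ``$(\lambda,\mu)$-limit model'' as $\mu$ ranges over the infinite regular cardinals with $\mu\le\lambda$. I would split these $\mu$ into the \emph{small} ones, with $\mu<\aleph_{\delta}$, and the \emph{large} ones, with $\aleph_{\delta}\le\mu\le\lambda$.

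For the large cofinalities I would use that $R$ being left $(<\aleph_{\delta})$-noetherian means precisely $\gamma(R)\le\aleph_{\delta}$. Hence any large $\mu$ satisfies $\mu\ge\aleph_{\delta}\ge\gamma(R)$, so Lemma~\ref{long-lim} shows that all $(\lambda,\mu)$-limit models with $\mu$ large are pairwise isomorphic; the large cofinalities therefore contribute at most one isomorphism type. To make this precise I would define a map from the isomorphism types of $\lambda$-limit models into the set $\{\,\mu:\mu\text{ infinite regular},\ \mu<\aleph_{\delta}\,\}\cup\{\ast\}$, sending the type of the $(\lambda,\mu)$-limit model to $\mu$ if $\mu<\aleph_{\delta}$ and to $\ast$ otherwise. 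Fact~\ref{same-cof} makes this map well defined on the first part and Lemma~\ref{long-lim} on the second, and those same two facts show the map is injective: two isomorphism types sharing a value $\mu<\aleph_{\delta}$ are both the type of the $(\lambda,\mu)$-limit model and hence equal, while two sharing the value $\ast$ arise from cofinalities $\ge\aleph_{\delta}\ge\gamma(R)$ and so are isomorphic.

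It then remains to observe that the infinite cardinals strictly below $\aleph_{\delta}$ are exactly $\{\aleph_{\xi}:\xi<\delta\}$, a set of cardinality $|\delta|$ (when $\delta$ is finite this is $\delta$ read as a cardinal, as stipulated before the statement), so at most $|\delta|$ of them are regular; hence the codomain of the injection has size at most $|\delta|+1$, which gives the bound. I do not anticipate a genuine obstacle here. The only points needing care are the cardinal bookkeeping when $\delta$ is $0$ or finite, and the remark that the degenerate situations — no infinite regular cardinal strictly between $\aleph_{0}$ and $\aleph_{\delta}$, or $\lambda$ so small that no large cofinality occurs — only decrease the count, which is harmless since we are proving an upper bound.
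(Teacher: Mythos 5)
Your proposal is correct and follows essentially the same route as the paper: small cofinalities $\mu<\aleph_{\delta}$ are handled by Fact~\ref{same-cof} (contributing at most $|\delta|$ types), and all cofinalities $\ge\aleph_{\delta}\ge\gamma(R)$ collapse to a single type by Lemma~\ref{long-lim}, giving the bound $|\delta|+1$. The paper phrases this by exhibiting the explicit representatives $M_{\alpha}$ for $\alpha\le\delta$ rather than via an injection into a set of invariants, but the content is identical.
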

\begin{proof}
For every $\alpha \leq \delta $, let $M_\alpha$ be the $(\lambda, \aleph_\alpha)$-limit model. We show  that every $\lambda$-limit model is isomorphic to $M_\alpha$ for some $\alpha \leq \delta $.

Assume $L$ is a $(\lambda, \beta)$-limit model. If $cf(\beta)= \aleph_\alpha$ for $\alpha < \delta$, then $L$ is isomorphic to $M_\alpha$ by Fact \ref{same-cof}. If $cf(\beta) \geq \aleph_\delta$ then $cf(\beta) \geq \gamma(R)$ as $R$ is $(<\aleph_{\delta})$-noetherian,  so $L$ is isomorphic to $M_{\delta}$ by Lemma \ref{long-lim} and the fact that $\operatorname{cf}(\aleph_\delta)= \aleph_\delta \geq  \gamma(R)$.
\end{proof}

\begin{remark}
If we do not assume that  $\aleph_\delta$ is regular and instead assume that $\lambda \geq \aleph_{\delta +1}$, the proof of the previous result can be modified to show that in that case there are also at most $|\delta| + 1$ non-isomorphic $\lambda$-limit models. This is achieved by  considering the $(\lambda, \aleph_{\delta +1})$-limit model instead of the $(\lambda, \aleph_{\delta})$-limit model.
\end{remark}

We provide a lower bound to the number of limit models.
 
\begin{lemma}\label{below} Let $\delta$ be an ordinal (possibly finite)  and $\lambda$ be a cardinal such that $(\Rm, \subseteq_R)$ is stable in $\lambda$ and $\lambda \geq \aleph_{\delta +1}$. 	 If $R$ is  not $(< \aleph_\delta)$-noetherian, then there at least $|\delta|+2$ non-isomorphic $\lambda$-limit models.
\end{lemma}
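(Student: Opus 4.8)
The plan is to exhibit $|\delta|+2$ pairwise non-isomorphic $\lambda$-limit models. Let $A=\{\alpha\le\delta+1: \alpha=0\text{ or }\alpha\text{ is a successor ordinal}\}$; splitting into the cases $\delta$ finite and $\delta$ infinite one checks that $A$ has exactly $|\delta|+2$ elements in the sense of cardinal arithmetic. For each $\alpha\in A$ the cardinal $\aleph_\alpha$ is regular (being $\aleph_0$ or a successor cardinal) and $\aleph_\alpha\le\aleph_{\delta+1}\le\lambda$, so by stability in $\lambda$ there is a $(\lambda,\aleph_\alpha)$-limit model $M_\alpha$; the goal is to show the $M_\alpha$ ($\alpha\in A$) are pairwise non-isomorphic.

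The algebraic ingredient is: for every regular cardinal $\mu\le\aleph_\delta$ there is a strictly $\mu$-generated left ideal. Since $R$ is not left $(<\aleph_\delta)$-noetherian, $\gamma(R)>\aleph_\delta$, so some left ideal $I$ is not $(<\aleph_\delta)$-generated; letting $\nu$ be the least cardinality of a generating set of $I$ we get $\aleph_\delta\le\nu<\gamma(R)$ and $I$ is strictly $\nu$-generated. If $\mu=\nu$ (which forces $\mu=\aleph_\delta=\nu$), then $I$ itself works; if $\mu<\nu$, Proposition \ref{small-strict} produces a strictly $\mu$-generated ideal contained in $I$. This is the one step with actual content; the rest is bookkeeping.

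Finally, fix $\alpha<\alpha'$ in $A$, so $\alpha\le\delta$ and $\aleph_\alpha$ is regular. By the ingredient above there is a strictly $\aleph_\alpha$-generated ideal, so Lemma \ref{non-inj} shows that $M_\alpha$ is not $\aleph_{\alpha+1}$-injective. On the other hand $\operatorname{cf}(\aleph_{\alpha'})=\aleph_{\alpha'}$, so $M_{\alpha'}$ is $\aleph_{\alpha'}$-injective by Lemma \ref{k-inj}, hence $\aleph_{\alpha+1}$-injective since $\alpha+1\le\alpha'$; therefore $M_\alpha\not\cong M_{\alpha'}$. Two points need care. First, Lemma \ref{non-inj} is invoked only at indices $\alpha\le\delta$, never at $\delta+1$: a strictly $\aleph_{\delta+1}$-generated ideal need not exist (e.g.\ when $\gamma(R)=\aleph_{\delta+1}$). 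Second, we never need the top model $M_{\delta+1}$ to be genuinely injective, only $\aleph_{\delta+1}$-injective, which is unconditional by Lemma \ref{k-inj}; this lets us avoid having to assume $\gamma(R)\le\lambda$. So the main obstacle is not conceptual but the index bookkeeping — confirming $|A|=|\delta|+2$ uniformly and that the inequalities $\aleph_\alpha\le\lambda$, $\alpha+1\le\alpha'$, and the availability of the strictly $\aleph_\alpha$-generated ideals all line up for exactly the indices used.
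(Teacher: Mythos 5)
Your proposal is correct and follows essentially the same route as the paper: index the models by a size-$(|\delta|+2)$ family of ordinals $\alpha\le\delta+1$ with $\aleph_\alpha$ regular, extract a strictly $\aleph_\alpha$-generated ideal from the failure of $(<\aleph_\delta)$-noetherianity via Proposition \ref{small-strict}, and separate $M_\alpha$ from $M_{\alpha'}$ by $\aleph_{\alpha+1}$-injectivity using Lemmas \ref{non-inj} and \ref{k-inj}. The only cosmetic difference is that the paper takes all $\alpha\le\delta+1$ with $\aleph_\alpha$ regular rather than just $0$ and the successors, and your extra bookkeeping (where the strictly $\nu$-generated ideal comes from, and that Lemma \ref{non-inj} is never invoked at $\delta+1$) just makes explicit what the paper leaves implicit.
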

\begin{proof} Let $\Sigma= \{ \alpha \leq \delta + 1 : \aleph_{\alpha} \text{ is regular } \}$. Observe that $| \Sigma | = |\delta| + 2$.


For every $\alpha \in \Delta$, let $M_\alpha$ be the $(\lambda, \aleph_\alpha)$-limit model. We show that if
$\alpha \neq \beta \leq \delta + 1$ both in $\Sigma$, then $M_\alpha$ is not isomorphic to $M_\beta$.

 Assume without loss of generality that $\alpha < \beta \leq \delta + 1$. Since $R$ is  not $(< \aleph_\delta)$-noetherian then there is $I$ a strictly $\mu$-generated ideal for some $\mu \geq \aleph_\delta$. So there is $J$ a strictly $\aleph_\alpha$-generated ideal by Proposition \ref{small-strict} as $\aleph_\alpha$ is regular and $\aleph_\alpha \leq \aleph_\delta$ . Then $M_\alpha$ is not $\aleph_{\alpha+1}$-injective by Lemma \ref{non-inj}. Since $M_\beta$ is $\aleph_\beta$-injective by Lemma \ref{k-inj} and $\alpha < \beta$, we have that $M_\beta$ is $\aleph_{\alpha + 1}$-injective. Therefore, $M_\alpha$ is not isomorphic to $M_\beta$. \end{proof}

We show that the number of limit models and how close a ring is from being noetherian are inversely proportional. 

\begin{theorem}\label{main-f} Let $n \geq 0$ The following are equivalent.

\begin{enumerate}
\item $R$ is left $(<\aleph_{n } )$-noetherian but not left $(< \aleph_{n -1 })$-noetherian.\footnote{If $n=0$ this should be understood as  $R$ is left $(<\aleph_{0 } )$-noetherian, i.e., $R$ is left noetherian. }
\item $(R\text{-Mod}, \subseteq_R)$ has exactly $n +1 $ non-isomorphic $\lambda$-limit models for every $\lambda \geq (\operatorname{card}(R) + \aleph_0)^+$ such that $(R\text{-Mod}, \subseteq_R)$ is stable in $\lambda$. 
\item $(R\text{-Mod}, \subseteq_R)$ has exactly $n +1 $ non-isomorphic $2^{\operatorname{card}(R) + \aleph_0}$-limit models.
\end{enumerate}
\end{theorem}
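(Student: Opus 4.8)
The plan is to establish the cycle $(1)\Rightarrow(2)\Rightarrow(3)\Rightarrow(1)$, reading the $n=0$ case via the convention in the footnote. Everything is extracted from the upper bound of Lemma \ref{above} (if $R$ is left $(<\aleph_\delta)$-noetherian and $\aleph_\delta$ is regular, at most $|\delta|+1$ non-isomorphic $\lambda$-limit models for suitable $\lambda$) and the lower bound of Lemma \ref{below} (if $R$ is not left $(<\aleph_\delta)$-noetherian, at least $|\delta|+2$ of them for suitable $\lambda$), together with the stability criterion of Fact \ref{st-no}; these bounds "match" at $n+1$ because $(n-1)+2 = n+1$. Since $n$ is finite, every $\aleph_k$ with $k\le n+1$ is regular, so Lemma \ref{above} may be applied with index $n$ or $n-1$ with no regularity obstruction. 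I will freely use $\gamma(R)\le\operatorname{card}(R)^+$, that $\kappa^+$ is regular, and $2^\kappa\ge\kappa^+$.

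For $(1)\Rightarrow(2)$, fix $\lambda\ge(\operatorname{card}(R)+\aleph_0)^+$ with $(\Rm,\subseteq_R)$ stable in $\lambda$. First I would note that automatically $\lambda\ge\aleph_n$: since $R$ is not left $(<\aleph_{n-1})$-noetherian, $\gamma(R)\ge\aleph_n$, so Fact \ref{st-no} gives $\lambda^{\aleph_{n-1}}\le\lambda^{<\gamma(R)}=\lambda$, which is impossible if $\lambda\le\aleph_{n-1}$ because then $\lambda^{\aleph_{n-1}}\ge 2^{\aleph_{n-1}}>\aleph_{n-1}$. Granted $\lambda\ge\aleph_n$, Lemma \ref{above} with $\delta=n$ (using that $R$ is left $(<\aleph_n)$-noetherian and $\aleph_n$ is regular) gives at most $|n|+1=n+1$ non-isomorphic $\lambda$-limit models, while Lemma \ref{below} with $\delta=n-1$ (using $\lambda\ge\aleph_n=\aleph_{(n-1)+1}$) gives at least $|n-1|+2=n+1$; for $n=0$ the lower bound is just the remark that stability in $\lambda$ produces, by definition, at least one $\lambda$-limit model. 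Hence exactly $n+1$.

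For $(2)\Rightarrow(3)$ it is enough to check that $\lambda_0:=2^{\operatorname{card}(R)+\aleph_0}$ is an admissible value of $\lambda$ in (2): indeed $\lambda_0\ge(\operatorname{card}(R)+\aleph_0)^+$, and $\lambda_0^{<\gamma(R)}\le\lambda_0^{\operatorname{card}(R)}=2^{\operatorname{card}(R)+\aleph_0}=\lambda_0$ since $\gamma(R)\le\operatorname{card}(R)^+$, so $(\Rm,\subseteq_R)$ is stable in $\lambda_0$ by Fact \ref{st-no}; thus (2) at $\lambda_0$ is literally (3). For $(3)\Rightarrow(1)$, keep $\lambda_0=2^{\operatorname{card}(R)+\aleph_0}$, stable as above, so by (3) there are exactly $n+1$ non-isomorphic $\lambda_0$-limit models. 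If $R$ were not left $(<\aleph_n)$-noetherian, then $\gamma(R)\ge\aleph_{n+1}$, hence $\operatorname{card}(R)\ge\aleph_n$ and $\lambda_0\ge 2^{\aleph_n}\ge\aleph_{n+1}$, so Lemma \ref{below} with $\delta=n$ would force at least $n+2$ limit models, a contradiction; hence $R$ is left $(<\aleph_n)$-noetherian. If moreover $n\ge1$ and $R$ were left $(<\aleph_{n-1})$-noetherian, put $\aleph_\delta:=\min\{\aleph_{n-1},\operatorname{card}(R)^+\}$; this is regular (a minimum of two regular cardinals), satisfies $\aleph_\delta\le\operatorname{card}(R)^+\le\lambda_0$, has $\gamma(R)\le\aleph_\delta$ (from $\gamma(R)\le\operatorname{card}(R)^+$ and the hypothesis $\gamma(R)\le\aleph_{n-1}$), and has $\delta\le n-1$; then Lemma \ref{above} would force at most $|\delta|+1\le n$ limit models, again contradicting (3). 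So $R$ is left $(<\aleph_n)$-noetherian but not left $(<\aleph_{n-1})$-noetherian, which is (1).

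The step I expect to demand the most care is $(3)\Rightarrow(1)$: both bounding lemmas need $\lambda$ to be large relative to the aleph index used, and in (3) we are pinned to the single cardinal $\lambda_0=2^{\operatorname{card}(R)+\aleph_0}$, so one must verify precisely in the cases where a contradiction is wanted that $\lambda_0$ is big enough. This works because failure of left $(<\aleph_n)$-noetherianness pushes $\operatorname{card}(R)$, and hence $\lambda_0$, above $\aleph_{n+1}$, and because when $R$ is small the index $\aleph_{n-1}$ can be harmlessly replaced by $\min\{\aleph_{n-1},\operatorname{card}(R)^+\}\le\lambda_0$ without increasing it past $n-1$. Apart from this bookkeeping, the argument is a direct invocation of Lemmas \ref{above} and \ref{below} and Fact \ref{st-no}, plus the elementary cardinal arithmetic noted above.
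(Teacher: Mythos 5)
Your proposal is correct and takes essentially the same route as the paper, which proves the theorem by combining the upper bound of Lemma \ref{above}, the lower bound of Lemma \ref{below}, stability in $2^{\operatorname{card}(R)+\aleph_0}$, and the observation that stability in $\lambda\ge(\operatorname{card}(R)+\aleph_0)^+$ forces $\lambda\ge\gamma(R)$. Your extra bookkeeping (deriving $\lambda\ge\aleph_n$ from Fact \ref{st-no}, and replacing $\aleph_{n-1}$ by $\min\{\aleph_{n-1},\operatorname{card}(R)^+\}$ in the $(3)\Rightarrow(1)$ step) is just an explicit unwinding of that same observation.
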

\begin{proof}
The result follows directly from Lemma 3.14 and Lemma 3.16, the fact that ${(R\text{-Mod}, \subseteq_R)}$ is stable in $2^{\operatorname{card}(R) + \aleph_0}$ and the following observation:   if $(R\text{-Mod}, \subseteq_R)$ is stable in $\lambda \geq (\operatorname{card}(R) + \aleph_0)^+$ then $\lambda \geq \gamma(R)$ as $\lambda ^{<\gamma(R)} = \lambda$ by Fact \ref{st-no}.
\end{proof}

\begin{remark}
The equivalence for $n=0$ of the previous result was obtained in \cite[3.12]{m2}.
\end{remark}

For infinite cardinals we do not obtain an equivalence as in Theorem \ref{main-f}, but we  have the forward direction.

\begin{lemma}\label{inf} Let $\kappa$ be an infinite cardinal.
If $R$ is left $(<\aleph_{\kappa +1 } )$-noetherian but not left $(< \aleph_{\kappa })$-noetherian, then  $(R\text{-Mod}, \subseteq_R)$ has exactly $\kappa $ non-isomorphic $\lambda$-limit models for every $\lambda \geq (\operatorname{card}(R) + \aleph_0)^+$ such that $(R\text{-Mod}, \subseteq_R)$ is stable in $\lambda$. 

\end{lemma}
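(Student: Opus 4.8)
The plan is to mirror the argument used for Theorem \ref{main-f}, replacing the finite $n$ by the infinite cardinal $\kappa$ and working with the set of regular alephs below the relevant threshold. Write $\lambda$ for a cardinal with $\lambda \geq (\operatorname{card}(R) + \aleph_0)^+$ such that $(\Rm, \subseteq_R)$ is stable in $\lambda$; by Fact \ref{st-no} we have $\lambda^{<\gamma(R)} = \lambda$, hence $\lambda \geq \gamma(R)$, and since $R$ is left $(<\aleph_{\kappa+1})$-noetherian, $\gamma(R) \leq \aleph_{\kappa+1}$; combined with the failure of $(<\aleph_\kappa)$-noetherianity this pins down $\gamma(R)$ to lie between $\aleph_\kappa$ and $\aleph_{\kappa+1}$ — in particular $\lambda \geq \aleph_\kappa$, which is what we need to invoke the limit-model existence statements. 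The upper bound: apply (a version of) Lemma \ref{above}. Taking $\delta = \kappa$ (or $\kappa+1$ if $\aleph_\kappa$ happens to be singular, using the remark after Lemma \ref{above}), since $R$ is $(<\aleph_{\kappa+1})$-noetherian, every $(\lambda,\beta)$-limit model with $\operatorname{cf}(\beta)$ an infinite cardinal either has $\operatorname{cf}(\beta) = \aleph_\alpha$ for some $\alpha \leq \kappa$, in which case it is determined by $\alpha$ via Fact \ref{same-cof}, or $\operatorname{cf}(\beta) \geq \aleph_{\kappa+1} \geq \gamma(R)$, in which case it is the unique injective long limit model by Lemma \ref{long-lim}. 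So the non-isomorphic $\lambda$-limit models are among the $M_\alpha$ (the $(\lambda,\aleph_\alpha)$-limit model) for $\alpha$ in $\{\,\alpha \leq \kappa : \aleph_\alpha \text{ regular}\,\}$ together with the injective one; the cardinality of this index set is $|\kappa| = \kappa$.

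The lower bound is the heart of the matter and follows the pattern of Lemma \ref{below}. Let $\Sigma = \{\,\alpha \leq \kappa : \aleph_\alpha \text{ is regular}\,\}$; since $\alpha \mapsto \aleph_{\alpha+1}$ is an injection of $\kappa$ into $\Sigma$ (and $\Sigma \subseteq \kappa+1$), we get $|\Sigma| = \kappa$. For $\alpha \in \Sigma$ let $M_\alpha$ be the $(\lambda,\aleph_\alpha)$-limit model (it exists since $(\Rm,\subseteq_R)$ is stable in $\lambda$ and $\aleph_\alpha \leq \aleph_\kappa \leq \lambda$; here I should double-check $\aleph_\alpha < \lambda^+$, which holds because $\lambda \geq \aleph_\kappa \geq \aleph_\alpha$). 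For $\alpha < \beta$ both in $\Sigma$: because $R$ is not left $(<\aleph_\kappa)$-noetherian there is a strictly $\mu$-generated ideal with $\mu \geq \aleph_\kappa$, hence by Proposition \ref{small-strict} (applied with the regular cardinal $\aleph_\alpha \leq \aleph_\kappa \leq \mu$) a strictly $\aleph_\alpha$-generated ideal $J \subseteq I$; so $M_\alpha$ is not $\aleph_{\alpha+1}$-injective by Lemma \ref{non-inj}, while $M_\beta$ is $\aleph_\beta$-injective by Lemma \ref{k-inj} and $\aleph_\beta \geq \aleph_{\alpha+1}$ gives $M_\beta$ is $\aleph_{\alpha+1}$-injective. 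Hence $M_\alpha \not\cong M_\beta$, giving at least $|\Sigma| = \kappa$ non-isomorphic $\lambda$-limit models. Combining the two bounds yields exactly $\kappa$.

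The main obstacle I anticipate is purely bookkeeping at the singular-$\aleph_\kappa$ boundary and the exact cardinal arithmetic identifying $|\Sigma|$ with $\kappa$: one must be careful that Proposition \ref{small-strict} needs $\aleph_\alpha$ \emph{regular and strictly below} the size at which the ideal becomes strictly generated, and that the count of regular alephs up to $\aleph_\kappa$ is indeed $\kappa$ regardless of whether $\kappa$ is regular, a limit, or a successor (this uses that every successor aleph is regular, so among $\{\aleph_\alpha : \alpha < \kappa\}$ at least the successors $\aleph_{\alpha+1}$ are all regular and distinct). No genuinely new idea beyond Theorem \ref{main-f} is required; the reason we only get the forward direction, and not an equivalence, is that for infinite $\kappa$ the number of regular alephs $\leq \aleph_\kappa$ need not literally determine where $\gamma(R)$ sits — different rings with $\gamma(R)$ anywhere in $(\aleph_\kappa, \aleph_{\kappa+1}]$ all give the count $\kappa$ — so the count no longer reads off the precise noetherianity degree.
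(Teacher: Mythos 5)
Your argument is correct and follows essentially the same route as the paper's (implicit) proof, which just instantiates the two counting lemmas: since $\aleph_\kappa<\gamma(R)\leq\aleph_{\kappa+1}$ forces $\gamma(R)=\aleph_{\kappa+1}$ and hence $\lambda\geq\aleph_{\kappa+1}$, Lemma \ref{above} with $\delta=\kappa+1$ gives at most $|\kappa+1|+1=\kappa$ limit models and Lemma \ref{below} with $\delta=\kappa$ gives at least $|\kappa|+2=\kappa$. The one slip is the parenthetical ``taking $\delta=\kappa$'' for the upper bound: Lemma \ref{above} with $\delta=\kappa$ would require $R$ to be $(<\aleph_\kappa)$-noetherian, which fails by hypothesis, so the correct (and automatically regular, hence needing no singularity caveat) choice is $\delta=\kappa+1$ --- which is exactly the dichotomy you then write out.
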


The backward direction of Lemma \ref{inf} fails since a ring which is left $(<\aleph_{\kappa +2 } )$-noetherian but not left $(< \aleph_{\kappa +1 })$-noetherian has exactly $\kappa $ non-isomorphic $\lambda$-limit models for every $\lambda \geq (\operatorname{card}(R) + \aleph_0)^+$ such that $(R\text{-Mod}, \subseteq_R)$ is stable in $\lambda$. An example of such a ring is given in Example \ref{m-ex}.

\begin{remark} Since $\kappa +1 = \kappa^+$ for every $\kappa$ a finite cardinal, when generalizing Theorem \ref{main-f} to $\kappa$ an infinite cardinal one might consider replacing \emph{$(< \aleph_{\kappa +1 })$-noetherian}  by  \emph{$(<\aleph_{\kappa^+ } )$-noetherian} in the statement of Lemma \ref{inf}. We will call this new statement the \emph{revised statement} in the next paragraph.

Both directions of the revised statement fail if  there are rings as those stated in Problem \ref{p2}, while one has an equivalence in the revised statement if those rings do not exist.  As we expect that there are rings as those stated in  Problem \ref{p2} we do not provide further details.\end{remark}

\begin{remark}
Theorem \ref{main-f} and Lemma \ref{inf} provide the first examples of abstract elementary classes with exactly $\kappa$ non-isomorphic $\lambda$-limit models for cardinal $\kappa > 2$. These examples strengthen the need to continue the study of strictly stable AECs. Furthermore, these examples provide important intuition on what to expect in the strictly stable setting. 
\end{remark}

We further classify the isomorphism types of the limit models in the AEC of modules with embeddings. We denote by $\operatorname{Reg}$ the class of regular cardinals and denote by $[\lambda_1, \lambda_2)$ the cardinals greater or equal than $\lambda_1$ and strictly less than $\lambda_2$.

\begin{lemma}\label{cla}
Let $\lambda$ be a cardinal such that $\lambda \geq \gamma(R)^+$ and $(R\text{-Mod}, \subseteq_R)$ is stable in $\lambda$.

If $\alpha, \beta  \in ([\aleph_0, \gamma(R)) \cap \operatorname{Reg}) \cup \{\gamma(R)^+\}$ and $\alpha \neq \beta$, then the $(\lambda, \alpha)$-limit model and the $(\lambda, \beta)$-limit model are not isomorphic. Furthermore, every $\lambda$-limit model is isomorphic to a $(\lambda, \alpha)$-limit model for some $\alpha \in  ([\aleph_0, \gamma(R)) \cap \operatorname{Reg}) \cup \{\gamma(R)^+\}$. 
\end{lemma}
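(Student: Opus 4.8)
The plan is to prove the two assertions separately, in each case assembling the two key algebraic lemmas, Lemma \ref{k-inj} and Lemma \ref{non-inj}, together with Bumby's theorem (Fact \ref{inj-iso}) and the cofinality invariance of limit models (Fact \ref{same-cof}). Throughout I write $M_\alpha$ for the $(\lambda,\alpha)$-limit model; since $(\Rm,\subseteq_R)$ is stable in $\lambda$ and every $\alpha$ in the index set $([\aleph_0,\gamma(R))\cap\operatorname{Reg})\cup\{\gamma(R)^+\}$ satisfies $\alpha\le\gamma(R)^+\le\lambda<\lambda^+$ and is a limit ordinal, the model $M_\alpha$ is defined for each such $\alpha$.

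For the non-isomorphism part I would fix $\alpha\neq\beta$ in the index set and, by symmetry, assume $\alpha<\beta$; then $\alpha$ is a regular cardinal with $\alpha<\gamma(R)$. The first step is to produce a strictly $\alpha$-generated left ideal: by minimality of $\gamma(R)$ there is a left ideal $I$ which is not $(<\alpha)$-generated, and if $\mu$ denotes the least cardinality of a generating set of $I$ then $\alpha\le\mu<\gamma(R)$, so $I$ is strictly $\mu$-generated, and in the case $\alpha<\mu$ Proposition \ref{small-strict} (applicable because $\alpha$ is regular) yields a strictly $\alpha$-generated sub-ideal. Lemma \ref{non-inj} then gives that $M_\alpha$ is not $\alpha^+$-injective. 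On the other side, $\operatorname{cf}(\beta)=\beta\ge\alpha^+$ when $\beta<\gamma(R)$, and $\operatorname{cf}(\beta)=\gamma(R)^+\ge\gamma(R)$ when $\beta=\gamma(R)^+$; so Lemma \ref{k-inj} shows that $M_\beta$ is $\beta$-injective, hence $\alpha^+$-injective, in the first case, and injective, hence $\alpha^+$-injective, in the second. Since $M_\alpha$ is not $\alpha^+$-injective while $M_\beta$ is, the two cannot be isomorphic.

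For the classification part I would take an arbitrary $\lambda$-limit model $L$, say a $(\lambda,\delta)$-limit model with $\delta<\lambda^+$ a limit ordinal, and set $\theta=\operatorname{cf}(\delta)$, a regular cardinal with $\aleph_0\le\theta\le\delta<\lambda^+$. If $\theta<\gamma(R)$, then $\theta\in[\aleph_0,\gamma(R))\cap\operatorname{Reg}$ and $L\cong M_\theta$ by Fact \ref{same-cof}. If $\theta\ge\gamma(R)$, then $L$ is injective by Lemma \ref{k-inj}; the model $M_{\gamma(R)^+}$ is defined (as $\gamma(R)^+\le\lambda<\lambda^+$) and is likewise injective by Lemma \ref{k-inj}; since both are universal in $\Rm_\lambda$ by Fact \ref{uni-l} they embed into one another, and Fact \ref{inj-iso} forces $L\cong M_{\gamma(R)^+}$.

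The only delicate point I anticipate is the bookkeeping around $\gamma(R)$: one must check that $\gamma(R)$ itself never needs to be listed as a separate index — if it happens to be regular and arises as a cofinality, the corresponding limit model is already injective and hence isomorphic to $M_{\gamma(R)^+}$ — and that the degenerate case $\gamma(R)=\aleph_0$, where $[\aleph_0,\gamma(R))\cap\operatorname{Reg}=\emptyset$ and both statements are vacuous or already known, is correctly subsumed. Beyond that the argument is a routine assembly of the cited results, so I do not expect a genuine obstacle.
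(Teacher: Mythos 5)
Your proposal is correct and follows essentially the route the paper intends: the paper's proof of this lemma is just a pointer to "a similar argument to Lemmas \ref{below} and \ref{above}," and your argument is exactly that — Lemma \ref{non-inj} plus Proposition \ref{small-strict} to separate the short limit models, Lemma \ref{k-inj} to get the requisite degree of injectivity on the longer one, and Fact \ref{same-cof} together with the Bumby argument of Lemma \ref{long-lim} for the classification. The bookkeeping you flag around $\gamma(R)$ itself is handled correctly and matches the remark the paper makes immediately after the lemma.
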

\begin{proof}
A similar argument to those of Lemma \ref{below} and Lemma \ref{above} can be used to obtain the result. 
\end{proof}

\begin{remark} Under the assumptions of the previous result we have that if $\gamma(R)$ is regular then the $(\lambda, \gamma(R))$-limit model is isomorphic to the  $(\lambda, \gamma(R)^+)$-limit model. While if $\gamma(R)$ is singular then the $(\lambda, \gamma(R))$-limit model is isomorphic to the $(\lambda, cf(\gamma(R)))$-limit model for $cf(\gamma(R)) \in [\aleph_0, \gamma(R)) \cap \operatorname{Reg}$.

\end{remark}

Conjecture 1.1 of \cite{bovan} for $\lambda \geq \gamma(R)^+$ follows directly from Lemmas \ref{cla} and \ref{long-lim}.
 
 \begin{cor}
 Let $\lambda$ be a regular cardinal such that $\lambda \geq \gamma(R)^+$ and $(R\text{-Mod}, \subseteq_R)$ is stable in $\lambda$. Let

\[ \Gamma =  \{ \alpha < \lambda^+ : cf(\alpha) = \alpha \text{ and  the } (\lambda, \alpha)\text{-limit model is isomorphic to the } (\lambda, \lambda)\text{-limit model } \}. \]

Then  $\Gamma =[ \gamma(R), \lambda^+) \cap \operatorname{Reg}$. 
 \end{cor}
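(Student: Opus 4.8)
The plan is to prove the two inclusions separately, deriving $[\gamma(R),\lambda^+)\cap\operatorname{Reg}\subseteq\Gamma$ from Lemma \ref{long-lim} and the reverse inclusion from Lemma \ref{cla}. First I would record that every $\alpha\in\Gamma$ is an infinite regular cardinal below $\lambda^+$: the condition $cf(\alpha)=\alpha$, together with the fact that $\alpha$ is a limit ordinal (as is required for the phrase $(\lambda,\alpha)$-limit model to make sense), forces $\alpha$ to be infinite and regular, while $\alpha<\lambda^+$ is explicit in the definition of $\Gamma$. So $\Gamma\subseteq[\aleph_0,\lambda^+)\cap\operatorname{Reg}$, and it remains only to decide which such $\alpha$ lie in $\Gamma$.

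For $[\gamma(R),\lambda^+)\cap\operatorname{Reg}\subseteq\Gamma$, I would take a regular cardinal $\alpha$ with $\gamma(R)\le\alpha<\lambda^+$ and apply Lemma \ref{long-lim} to the $(\lambda,\alpha)$-limit model and the $(\lambda,\lambda)$-limit model: here $cf(\alpha)=\alpha\ge\gamma(R)$, and since $\lambda$ is regular with $\lambda\ge\gamma(R)^+$ we have $cf(\lambda)=\lambda\ge\gamma(R)$, so the two limit models are isomorphic and $\alpha\in\Gamma$. In particular $\gamma(R)^+\in\Gamma$, a fact I would reuse in the other direction.

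For $\Gamma\subseteq[\gamma(R),\lambda^+)\cap\operatorname{Reg}$, let $\alpha\in\Gamma$; by the first paragraph $\alpha$ is an infinite regular cardinal $<\lambda^+$, so it suffices to rule out $\alpha<\gamma(R)$. Assuming $\alpha<\gamma(R)$ toward a contradiction, both $\alpha$ and $\gamma(R)^+$ lie in $([\aleph_0,\gamma(R))\cap\operatorname{Reg})\cup\{\gamma(R)^+\}$ and $\alpha\neq\gamma(R)^+$, so Lemma \ref{cla} (whose hypotheses $\lambda\ge\gamma(R)^+$ and stability in $\lambda$ are among the hypotheses of the corollary) says the $(\lambda,\alpha)$-limit model is not isomorphic to the $(\lambda,\gamma(R)^+)$-limit model. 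But $\alpha\in\Gamma$ gives $(\lambda,\alpha)$-limit model $\cong$ $(\lambda,\lambda)$-limit model $\cong$ $(\lambda,\gamma(R)^+)$-limit model, the second isomorphism being the instance $\gamma(R)^+\in\Gamma$ from the previous paragraph, a contradiction. Hence $\alpha\ge\gamma(R)$, completing the reverse inclusion.

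I do not expect a genuine obstacle here: the corollary is essentially a bookkeeping combination of Lemmas \ref{cla} and \ref{long-lim}, as the sentence preceding its statement already asserts. The only points that require care are checking that the cofinality hypotheses of Lemma \ref{long-lim} are met when one of the indices is $\lambda$ or $\gamma(R)^+$ (both have cofinality $\ge\gamma(R)$ under the standing hypotheses), and observing at the outset that elements of $\Gamma$ are automatically regular cardinals, so that Lemma \ref{cla} is applicable to any element of $\Gamma$ lying below $\gamma(R)$.
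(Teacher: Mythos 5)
Your proof is correct and follows exactly the route the paper intends: the paper gives no explicit argument beyond asserting that the corollary ``follows directly from Lemmas \ref{cla} and \ref{long-lim},'' and your two inclusions are precisely the straightforward unpacking of that assertion (Lemma \ref{long-lim} for $[\gamma(R),\lambda^+)\cap\operatorname{Reg}\subseteq\Gamma$, Lemma \ref{cla} for the converse). The care you take with the cofinality hypotheses and with noting that elements of $\Gamma$ are automatically infinite regular cardinals is exactly the right bookkeeping.
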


In  \cite[5.9]{mj} it was shown that if $R$ is left noetherian then the $(\lambda, \alpha)$-limit model is $M^{(\alpha)}$ where $M$ is the $(\lambda, (\operatorname{card}(R) + \aleph_0)^+)$-limit model. The next result shows that the previous result is sharp.

\begin{lemma}
If $R$ is not left noetherian and $M$ is the $(\lambda, (\operatorname{card}(R) + \aleph_0)^+)$-limit model for $\lambda \geq  (\operatorname{card}(R) + \aleph_0)^+$, then $M^{(\alpha)}$ is not the $(\lambda, \alpha)$-limit model for every $\alpha $ such that $cf(\alpha) > \omega$.
\end{lemma}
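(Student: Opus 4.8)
The plan is to pin down a closure property that every $(\lambda,\alpha)$-limit model with $cf(\alpha)>\omega$ enjoys but that $M^{(\alpha)}$ fails whenever $R$ is not left noetherian; the property I would use is $\aleph_1$-injectivity. On the limit-model side this is immediate: since $cf(\alpha)>\omega$ we have $cf(\alpha)\geq\aleph_1$, so Lemma \ref{k-inj} gives that the $(\lambda,\alpha)$-limit model is $cf(\alpha)$-injective, hence $\aleph_1$-injective. As the $(\lambda,\alpha)$-limit model is unique up to isomorphism (Fact \ref{same-cof}), it then suffices to show that $M^{(\alpha)}$ is \emph{not} $\aleph_1$-injective. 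Throughout I assume, as the statement implicitly requires, that $(\Rm,\subseteq_R)$ is stable in $\lambda$ and $\alpha<\lambda^+$ so that the models in question exist; note that $cf(\alpha)>\omega$ forces $\alpha\geq\omega_1$.

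To show $M^{(\alpha)}$ is not $\aleph_1$-injective I would rerun, inside $M^{(\alpha)}$, the non-extendability argument from the proof of Lemma \ref{many-equiv} (case $(7)\Rightarrow(1)$). Since $R$ is not left noetherian, fix a non-finitely-generated left ideal and, inside it, a strictly increasing chain $\{J_n:n<\omega\}$ of finitely generated ideals; put $J=\bigcup_{n<\omega}J_n$, so that $J$ is $(<\aleph_1)$-generated and $J/J_n\neq 0$ for every $n$. Write $M^{(\alpha)}=\bigoplus_{i<\alpha}M_i$ with each $M_i=M$ and fix distinct indices $i_0,i_1,\dots<\alpha$ (possible as $\alpha\geq\omega_1$). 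Because $M$ is a $\lambda$-limit model and $\|J/J_n\|\leq\operatorname{card}(R)+\aleph_0<\lambda$, Fact \ref{uni-l} provides, after padding $J/J_n$ with a free module of rank $\lambda$ (using $\operatorname{card}(R)\leq\lambda$), an embedding $\iota_n:J/J_n\hookrightarrow M$ for each $n$. Define $f:J\to M^{(\alpha)}$ by declaring the $i_n$-coordinate of $f(a)$ to be $\iota_n(a+J_n)$ and all other coordinates $0$; this is well defined into the direct sum because every $a\in J$ lies in some $J_m$, whence $a+J_n=0$ for all $n\geq m$, and it is visibly $R$-linear. If $f$ extended to some $g:R\to M^{(\alpha)}$, then $g(1)$ would have only finitely many nonzero coordinates, so $g(1)_{i_n}=0$ for some $n$; choosing $a\in J\setminus J_n$ (possible since $J_n\subsetneq J_{n+1}\subseteq J$) gives $(g(a))_{i_n}=a\cdot g(1)_{i_n}=0$ while $(f(a))_{i_n}=\iota_n(a+J_n)\neq 0$, contradicting $f\subseteq g$. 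Hence $f$ has no extension to $R$, so $M^{(\alpha)}$ is not $\aleph_1$-injective, and therefore it is not the $(\lambda,\alpha)$-limit model.

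I do not anticipate a genuine obstacle here: the argument is essentially a transcription of one already in the paper, and the only points needing care are the well-definedness of $f$ into the direct sum (handled by the chain being strictly increasing with union $J$) and the extraction of the embeddings $\iota_n$ from universality. If anything, the conceptual step worth flagging is \emph{why} $\aleph_1$, rather than $cf(\alpha)$, is the right parameter: non-noetherianity of $R$ already supplies a countably generated "bad" ideal, and the hypothesis $cf(\alpha)>\omega$ is exactly what is needed both to make room for the countably many copies $M_{i_n}$ inside $M^{(\alpha)}$ and to guarantee $\aleph_1$-injectivity of the genuine $(\lambda,\alpha)$-limit model.
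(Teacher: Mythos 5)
Your proof is correct and follows essentially the same route as the paper, which simply says to repeat the $(7)\Rightarrow(1)$ argument of Lemma \ref{many-equiv} using the universality of $M$ from Fact \ref{uni-l}; you have merely filled in the details (the padding to cardinality $\lambda$, the well-definedness of $f$ into the direct sum, and the appeal to Lemma \ref{k-inj} for the $\aleph_1$-injectivity of the genuine $(\lambda,\alpha)$-limit model).
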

\begin{proof}
Basically repeat (7) implies (1) of Lemma \ref{many-equiv}; using that $M$ is universal in $\Rm_\lambda$ by Fact \ref{uni-l}. 
\end{proof}


\end{document}